\documentclass[11pt, amsfonts, dvipdfmx]{amsart}


\usepackage{amsmath,amssymb,amsthm,color,enumerate}
\usepackage{mathtools}
\usepackage[all]{xy}

\usepackage[usenames]{xcolor}
\definecolor{refkey}{rgb}{0.6, 0.7, 0.4}
\definecolor{labelkey}{rgb}{0, 0.7, 0.5}
\usepackage{todonotes}
\setlength{\marginparwidth}{2.5cm}


\textwidth 6in
\oddsidemargin .25in
\evensidemargin .25in
\parskip .02in


\SelectTips{eu}{12}


\newtheorem{thm}{Theorem}[section]
\newtheorem{prop}[thm]{Proposition}
\newtheorem{lem}[thm]{Lemma}
\newtheorem{cor}[thm]{Corollary}

\theoremstyle{definition}

\newtheorem{prob}[thm]{Problem}
\theoremstyle{remark}

\numberwithin{equation}{section}


\newcommand{\pt}{\mathrm{pt}}

\newcommand{\ZZ}{\mathbb{Z}}
\newcommand{\st}{\mathrm{star}}
\newcommand{\lk}{\mathrm{link}}

\makeatletter
\@namedef{subjclassname@2020}{%
  \textup{2020} Mathematics Subject Classification}
\makeatother


\title{Independence complexes of $(n \times 4)$ and $(n \times 5)$-grid graphs}

\author[T. Matsushita]{Takahiro Matsushita}
\address{Department of Mathematical Sciences, University of the Ryukyus, Nishihara-cho, Okinawa 903-0213, Japan}
\email{mtst@sci.u-ryukyu.ac.jp}

\author[S. Wakatsuki]{Shun Wakatsuki}
\address{Graduate School of Mathematics, Nagoya University, Furocho, Chikusaku,
Nagoya, 464-8602, Japan}
\email{shun.wakatsuki@math.nagoya-u.ac.jp}

\keywords{independence complexes; square grid graphs}
\subjclass[2020]{55P10; 05C69}

\begin{document}

\baselineskip.525cm

\maketitle

\begin{abstract}
We determine the homotopy types of the independence complexes of $(n \times 4)$ and $(n \times 5)$-square grid graphs. In fact, they are homotopy equivalent to wedges of spheres.
\end{abstract}

\section{Introduction}

Let $G = (V,E)$ be a finite simple graph. A subset $\sigma$ of $V$ is {\it independent} if no two vertices in $\sigma$ are not adjacent in $G$. The family of independent sets in $G$ forms a simplicial complex, and we call it the {\it independence complex $I(G)$ of $G$}.

The independence complex is one of the most principal constructions of simplicial complexes. In fact, a simplicial complex $K$ is isomorphic to some independence complex if and only if $K$ is a clique complex. A \emph{clique complex} is a simplicial complex such that the dimension of every minimal non-face is at most one, and has appeared in several branches of mathematics. The order complexes of posets, appearing in combinatorics \cite{Kozlov book}, and the Vietoris--Rips complexes, appearing in topological data analysis and geometric group theory \cite{PRSZ}, are typical examples of clique complexes. The matching complex of a graph $G$ is the independence complex of the line graph of $G$, and its precise homotopy type has been recently studied by several authors (see \cite{AGS}, \cite{BH}, \cite{BGJM}, \cite{Matsushita}, and \cite{Wachs}). Since the barycentric subdivision of every simplicial complex is a clique complex, for every simplicial complex $K$ there exists an independence complex homeomorphic to $K$.

The homotopy types of independence complexes have been extensively studied (see \cite{Adamaszek1}, \cite{Barmak}, \cite{BMLN}, \cite{EH}, \cite{Engstrom}, \cite{Iriye}).
In general, it is quite difficult to determine the homotopy type of the independence complex $I(G)$ even though the graph $G$ is simply described.
In this paper, we study the homotopy types of the independence complexes of grid graphs $\Gamma_{n,k}$. Here, for a pair $n$ and $k$ of positive integers, define the \emph{$(n \times k)$-grid graph $\Gamma_{n,k}$} by
\[ V(\Gamma_{n,k}) = \big\{ (x,y) \in \ZZ^2 \; | \; 1 \le x \le n, \; 1 \le y \le k \big\},\]
\[ E(\Gamma_{n,k}) = \big\{ \{ (x,y), (z,w) \} \; | \; (x,y), (z,w) \in V(\Gamma_{n,k}), \; |x-z| + |y-w| = 1 \big\}.\]

The goal of this paper is to determine the precise homotopy types of $I(\Gamma_{n,4})$ and $I(\Gamma_{n,5})$.

We now review known results related to $I(\Gamma_{n,k})$. When $k=1$, the graph $\Gamma_{n,1}$ is the path graph $P_n$ consisting of $n$ vertices, and the homotopy types of $I(\Gamma_{n,1})$ is determined by Kozlov \cite{Kozlov}. The homotopy types of $I(\Gamma_{n,2})$ and $I(\Gamma_{n,3})$ are determined by Adamaszek \cite{Adamaszek2}. In these cases, $I(\Gamma_{n,k})$ is contractible or homotopy equivalent to a sphere. Recently, the authors \cite{MW} determined the homotopy types of $I(\Gamma_{n,6})$, which are homotopy equivalent to wedges of spheres. Otherwise, the homotopy types of $I(\Gamma_{n,k})$ are little known.
However, Okura \cite{Okura} determined the Euler characteristic of $I(\Gamma_{n,4})$, and it is an unbounded function with respect to $n$. His computation implies that for large $n$, $I(\Gamma_{n,4})$ is neither contractible nor homotopy equivalent to a sphere. Bousquet-M\'elou, Linusson, and Nevo \cite{BMLN} studied the independence complexes of several types of square grid graphs.

Our grid graph $\Gamma_{n,k}$ is the cartesian product $P_n \times P_k$ of the path graphs $P_n$ and $P_k$. The homotopy types of independence complexes of $C_n \times P_k$ and $C_n \times C_k$ have also been studied by several authors, motivated by statistical physics. A celebrated result by Jonsson \cite{Jonsson1} asserts that if $n$ and $k$ are coprime, then the reduced Euler characteristic of $I(C_n \times C_k)$ is $-1$. His computation showed that the homotopy types of $I(P_n \times P_k)$ and $I(C_n \times C_k)$ seem to be very different since the absolute value of the Euler characteristic of $I(P_n \times P_4) = I(\Gamma_{n,4})$ tends to infinity (see \cite{Okura} or Theorem \ref{main thm n x 4}). See \cite{Adamaszek2}, \cite{Iriye}, \cite{Jonsson2}, \cite{Jonsson3}, and \cite{Thapper} for more information about these complexes.

Now we state the main results in this paper.

\begin{thm} \label{main thm n x 4}
For $n \ge 1$, there is a following homotopy equivalence:
\[ I(\Gamma_{n,4}) \simeq \begin{cases}
\bigvee_{2k} S^{n-1} & (n = 6k+1) \\
\bigvee_{2k+2} S^{n-1} & (n = 6k + 4) \\
\bigvee_{2k+1} S^{n-1} & (\textrm{$n = 6k + l$ with $l = 0,2,3,5$}).
\end{cases}\]
\end{thm}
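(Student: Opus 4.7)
The plan is to prove Theorem~\ref{main thm n x 4} by induction on $n$, with base cases covering $1 \le n \le 6$ (matching the period appearing in the statement) handled by direct computation. The two standard tools I would rely on are: (1) the \emph{fold lemma}, which asserts that if $N_G(v) \subseteq N_G(w)$ for distinct vertices $v,w$, then $I(G) \simeq I(G \setminus \{w\})$; and (2) the \emph{cofiber sequence}
\[ I(G \setminus N[v]) \hookrightarrow I(G \setminus \{v\}) \to I(G), \]
which arises because $I(G) = I(G \setminus \{v\}) \cup \st_{I(G)}(v)$ with the star contractible and the intersection equal to $I(G \setminus N[v])$. Two useful special cases are that if $I(G \setminus \{v\})$ is contractible then $I(G) \simeq \Sigma I(G \setminus N[v])$, and if $I(G \setminus N[v])$ is contractible then $I(G) \simeq I(G \setminus \{v\})$; more generally one obtains $I(G) \simeq I(G \setminus \{v\}) \vee \Sigma I(G \setminus N[v])$ whenever the connecting inclusion is null-homotopic.

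For the inductive step I would pick a distinguished vertex $v$ in the first column of $\Gamma_{n,4}$ (for example $v = (1,1)$ or $v = (1,2)$) and analyze $\Gamma_{n,4} \setminus \{v\}$ and $\Gamma_{n,4} \setminus N[v]$ separately. In each of these, the remaining vertices in the first one or two columns have small, nested neighborhoods, so a sequence of folds collapses these columns down, leaving a smaller grid $\Gamma_{n-a,4}$ (for some small $a$) together with possibly a few isolated or pendant vertices. The resulting recursion should have the shape $I(\Gamma_{n,4}) \simeq \Sigma^{b} I(\Gamma_{n-a,4}) \vee \bigvee S^{n-1}$, or a wedge of two such terms, and iterating the reduction enough times brings the parameter back to the same residue class modulo $6$. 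The base cases then pin down the sphere counts.

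The main obstacle is to choose $v$ together with the subsequent folds so that the recursion closes cleanly modulo $6$ and the sphere counts track correctly. Since the statement involves three distinct coefficient formulas ($2k$, $2k+1$, and $2k+2$) and since the dimensions of the wedge summands increase by one each time $n$ does, one needs to verify the count separately in each residue class; in practice this is likely to require two or three nested applications of the cofiber sequence at different vertices, with careful bookkeeping of which intermediate subgraphs have contractible independence complexes (and may be discarded) and which contribute a fixed number of extra top-dimensional spheres. The principal technical burden is therefore combinatorial: identifying the correct inductive unit to peel off and confirming that the number of spheres produced at each step matches the announced formula.
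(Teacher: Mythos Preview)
Your plan is in the same spirit as the paper's proof (fold lemma plus the cofiber sequence, with a period-$6$ induction), but as written it is only a strategy, and the two places where actual work is needed are not yet pinned down.

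First, the paper does \emph{not} apply the cofiber sequence directly to $\Gamma_{n,4}$ at a boundary vertex. Instead it introduces the auxiliary families $X_n=\Gamma_{n,4}-\{(n,2),(n,3)\}$ and $Y_n=\Gamma_{n,4}-\{(n,1),(n,4)\}$, proves $I(X_n)\simeq\Sigma^2 I(Y_{n-2})$ and $I(Y_n)\simeq\Sigma I(X_{n-1})$ by folds, and hence determines $I(X_n)$ completely (it is $S^{n-1}$ if $n\not\equiv 1\pmod 3$, and contractible otherwise). Only after a preliminary fold $I(\Gamma_n)\simeq I(G_n)$ with $G_n=\Gamma_n-\{(n-1,2),(n-1,3)\}$ does the cofiber sequence at $v=(n,4)$ give the clean recursion
\[
I(\Gamma_n)\ \simeq\ \Sigma I(X_{n-1})\ \vee\ \Sigma^2 I(\Gamma_{n-2}),
\]
from which $I(\Gamma_n)\simeq S^{n-1}\vee S^{n-1}\vee \Sigma^6 I(\Gamma_{n-6})$ follows by iterating three times and using the computation of $I(X_\bullet)$. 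Your proposal to split at $(1,1)$ or $(1,2)$ in $\Gamma_{n,4}$ itself, without such a preliminary fold, does not produce subgraphs that reduce to a smaller $\Gamma_{m,4}$; you will land in graphs like $X_n$ or $Y_n$ anyway, and you need to have analyzed those separately.

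Second, and more importantly, you note that the splitting $I(G)\simeq I(G-v)\vee\Sigma I(G-N[v])$ holds ``whenever the connecting inclusion is null-homotopic'', but you give no mechanism for verifying this. The paper's mechanism is a dimension argument run \emph{simultaneously} with the induction: one proves at the same time that $I(\Gamma_n)$ is a wedge of copies of $S^{n-1}$, so that in the cofiber sequence $\Sigma I(\Gamma_{n-2})\to \Sigma I(X_{n-1})\to I(\Gamma_n)$ the source is a wedge of $(n-2)$-spheres mapping into a wedge of $(n-1)$-spheres, hence null-homotopic. Without this simultaneous control on dimensions you cannot justify the wedge decomposition, and the recursion does not get off the ground.
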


\begin{thm} \label{main thm}
  There are following homotopy equivalences
\[ I(\Gamma_{1,5}) \simeq S^1, \; I(\Gamma_{4,5}) \simeq S^4, \; I(\Gamma_{5,5}) \simeq S^5, \; I(\Gamma_{9,5}) \simeq S^{10}.\]
  Assume that $n$ is a positive integer which is not equal to $1,4,5,9$. Let $t$, $l$, and $\varepsilon$ be non-negative integers satisfying $n = 20t + 4l + \varepsilon$, $l \in \{ 0,1,2,3,4\}$, and $\varepsilon \in \{ 0,1,2,3\}$.
  Define $\nu = \lfloor \varepsilon / 2\rfloor \in \{0, 1\}$ (i.e.,\ the greatest integer less than or equal to $\varepsilon / 2$)
  and $n' = 25t + 5l + \varepsilon - 1 + \nu$.
  Then the following hold:
  \begin{enumerate}[$(1)$]
    \item Assume that $n = 0, 4, 5, 9, 10, 14, 15, 19$ modulo $20$. Then
      \[
      I(\Gamma_{n,5}) \simeq \bigvee_{3-2\nu} S^{n'}
      \vee \Big( \bigvee_{n'-t-\nu< i<n'} \Big( \bigvee_4 S^i \Big) \Big)
      \vee \bigvee_2 S^{n'-t-\nu}.
      \]
    \item Assume that $n = 2, 3, 6, 7, 8, 11, 12, 13, 16, 17$ modulo $20$. Then
      \[
      I(\Gamma_{n,5}) \simeq \bigvee_{3-2\nu} S^{n'}
      \vee \Big( \bigvee_{n'-t\le i<n'} \Big(\bigvee_4 S^i\Big) \Big).
      \]
    \item Assume that $n = 1, 18$ modulo $20$. Then
      \[
      I(\Gamma_{n,5}) \simeq \bigvee_{3-2\nu} S^{n'}
      \vee \Big( \bigvee_{n'-t-2\nu< i<n'} \Big( \bigvee_4 S^i\Big) \Big).
      \]
  \end{enumerate}




\end{thm}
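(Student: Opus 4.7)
I would proceed by strong induction on $n$, using two standard tools for independence complexes: Engstr\"om's fold lemma (if $N_G(v) \subseteq N_G(w)$ with $v \neq w$, then $I(G) \simeq I(G - w)$) and the pushout $I(G) = I(G - v) \cup_{I(G - N[v])} \st_{I(G)}(v)$, in which $\st(v)$ is contractible; consequently $I(G)$ is the mapping cone of the inclusion $I(G - N[v]) \hookrightarrow I(G - v)$ and splits as $I(G - v) \vee \Sigma I(G - N[v])$ whenever that inclusion is null-homotopic.

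\textbf{The recursion.} The central step is to establish a homotopy equivalence of the form
\[ I(\Gamma_{n+4, 5}) \simeq \Sigma^5 I(\Gamma_{n, 5}) \vee W_n \]
for all sufficiently large $n$, where $W_n$ is an explicit wedge of spheres depending only on the residue of $n$ modulo $20$. I would do this by stripping the rightmost four columns of $\Gamma_{n+4, 5}$: at each stage, apply the pushout to a corner vertex on the current frontier, then use several folds to reshape the subgraphs $G - v$ and $G - N[v]$. Most of these pushouts leave $I(G - v)$ contractible and contribute one suspension to the final answer, while a few leave a nontrivial $I(G - v)$ whose extra summands accumulate into $W_n$. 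The fact that four stripped columns yield exactly five suspensions reflects the ``$5/4$'' dimension growth rate visible in the formula $n' = 25t + 5l + \varepsilon - 1 + \nu$.

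\textbf{Base cases and verification.} Compute $I(\Gamma_{n, 5})$ directly for $n \le 9$ using folds and the pushout, establishing both the four exceptional identifications $n \in \{1, 4, 5, 9\}$ and enough initial cases to launch the induction. Then plug the proposed formula into the recursion and check, case by case in (1)--(3), that adding $4$ to $n$ increments $l$ by $1$ (or resets $l$ to $0$ and increments $t$ when $l = 4$), raises $n'$ by $5$, and produces wedge summands matching $W_n$. The transition between $\nu = 0$ and $\nu = 1$ (i.e.\ $\varepsilon \in \{0,1\}$ vs.\ $\{2,3\}$) is the source of the factor $3 - 2\nu$ in the top-dimensional wedge count.

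\textbf{Main obstacle.} The principal difficulty is the combinatorial bookkeeping: at intermediate stages of the strip, the remaining subgraphs are grids with small defects near the frontier, and choosing the right folds to re-expose a clean $\Gamma_{n, 5}$ requires a delicate case analysis. Pinpointing each extra wedge summand with the correct dimension and multiplicity -- so the counts match those in (1)--(3) across all $20$ residue classes -- is where essentially all the work concentrates. The four exceptional values $n \in \{1, 4, 5, 9\}$ lie precisely in the regime where the recursion has not yet stabilized, which is why they must be handled separately.
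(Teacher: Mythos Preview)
Your proposed recursion $I(\Gamma_{n+4,5}) \simeq \Sigma^5 I(\Gamma_{n,5}) \vee W_n$ is numerically plausible, but the mechanism you describe for proving it contains a genuine gap. You assert that after stripping four columns via pushouts and folds one can ``re-expose a clean $\Gamma_{n,5}$.'' The paper's analysis shows that this is exactly what does \emph{not} happen: when one applies the cofiber sequence at the natural interior vertex $v=(n-2,3)$ and then folds, the link $I(\Gamma_n - N[v])$ does not reduce to any $I(\Gamma_m)$. Instead it reduces to the independence complex of a grid carrying a fixed ``collar'' decoration on its right edge; iterating the process piles on more collars. The paper handles this by introducing a two-parameter family $A_{n,k}$ (with $A_{n,0}=\Gamma_n$) in which the recursion takes the form $I(A_{n,k}-N[v])\simeq I(A_{n-5,k+1})$, so $n$ drops by $5$ while $k$ rises by $1$, and the index $k$ never returns to $0$. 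The homotopy type of $I(A_{n,k}-v)$ is then computed separately in closed form, and an induction on $n$ (within the enlarged family, not within $\{\Gamma_n\}$) shows by a dimension count that the relevant inclusions are null-homotopic.

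Thus the step you flag as ``the principal difficulty''---folding the frontier defects away---is not merely delicate bookkeeping; in the paper's setup it is impossible, and the resolution is to enlarge the class of graphs rather than to simplify back to a grid. Your outline gives no vertex choices, no fold sequences, and no argument that four columns rather than five (or some other number) lets the defects cancel, so as written the proposal does not constitute a proof. If you can exhibit an explicit sequence of pushouts and folds that genuinely returns to $\Gamma_{n,5}$ after four columns, that would be a new and interesting argument; but absent that, the missing idea is precisely the auxiliary family $A_{n,k}$.
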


\begin{cor} \label{main cor}
The Euler characteristic of $I(\Gamma_{n,5})$ is even and its absolute value is at most $4$.
\end{cor}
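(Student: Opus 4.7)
The plan is to deduce the corollary directly from Theorem \ref{main thm} by computing the Euler characteristic of each explicit wedge of spheres displayed there. For any wedge $\bigvee_j S^{d_j}$ one has $\chi = 1 + \sum_j (-1)^{d_j}$, so the task reduces to bounding the reduced Euler characteristic $\tilde{\chi}(I(\Gamma_{n,5}))$, which is an alternating sum of $2$'s, $4$'s, and $(3-2\nu)$'s with signs determined by the dimensions appearing in the wedge.

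First I would dispatch the four sporadic cases $n \in \{1,4,5,9\}$ by inspection: they give $\chi = 0, 2, 0, 2$ respectively, all satisfying the conclusion. For generic $n$ I would set $\alpha = (-1)^{n'}$ and observe that a middle block $\bigvee_4 S^i$ repeated over $m$ consecutive dimensions contributes $4\alpha\sum_{k=1}^{m}(-1)^k$ to $\tilde{\chi}$, and this alternating sum equals $0$ when $m$ is even and $-1$ when $m$ is odd. Combining this telescoped middle contribution with the boundary term $(3-2\nu)\alpha$ at the top and, in case $(1)$ only, $2\alpha(-1)^{t+\nu}$ at the bottom, I expect that in each of the three cases $\tilde{\chi} = c\alpha$ for some integer $c$ with $|c| \in \{1, 3\}$. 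Then $\chi = 1 + c\alpha \in \{-2, 0, 2, 4\}$, which establishes both the parity claim and the bound $|\chi| \leq 4$.

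The only real work is the parity bookkeeping: in case $(1)$ one splits on the parities of $m = t+\nu-1$ and of $t+\nu$; in case $(2)$ on the parity of $m = t$; in case $(3)$ on the parity of $m = t+2\nu-1$. I expect no conceptual obstacle here. The reason the bound comes out cleanly is structural: the middle blocks of four spheres over consecutive dimensions nearly cancel in the alternating sum, leaving a remainder of size at most $4$, and this remainder combines with the top/bottom boundary values of size at most $3$ so that the total has absolute value at most $3$. This forces $\chi$ into $\{-2,0,2,4\}$ in every case, which is exactly the content of the corollary.
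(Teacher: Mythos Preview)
Your proposal is correct and is exactly the argument the paper intends: the corollary is stated without proof as an immediate consequence of Theorem~\ref{main thm}, and your computation of $\tilde\chi$ from the wedge decompositions is precisely how one checks it. One small remark: the heuristic in your final paragraph (``remainder of size at most $4$'' plus ``boundary of size at most $3$'' gives total at most $3$) is not a valid inequality on its own---the signs have to conspire---but since you correctly locate the proof in the explicit case split on the parities of $m$, this is harmless.
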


Discrete Morse theory is a commonly used method to determine the homotopy types of independence complexes (see \cite{BMLN} and \cite{Thapper} for example). However, it is very difficult to use discrete Morse theory to determine the homotopy type of a simplicial complex which is not a wedge of spheres of the same dimension. For other methods to determine the homotopy types of independence complexes, see \cite{Adamaszek1}, \cite{Barmak}, and \cite{EH}. Our method is an elementary method using links of simplicial complexes, reviewed in Section 2.

Finally, we pose a problem related to the Euler characteristic of $I(\Gamma_{n,k})$, inspired by Theorem \ref{main thm n x 4} and Corollary \ref{main cor}.

\begin{prob}
Given a positive integer $k$, determine whether the function assigning $\chi(I(\Gamma_{n,k}))$ to a positive integer $n$ is a bounded function.
\end{prob}

Recently, the authors \cite{MW} showed that the function assigning $\chi(I(\Gamma_{n,6}))$ to $n$ is bounded.

The rest of this paper is organized as follows. In Section 2, we review definitions and facts related to independence complexes. Sections 3 and 4 are devoted to the proofs of Theorems \ref{main thm n x 4} and \ref{main thm}, respectively.

\section{Preliminaries}

In this section, we review several definitions and facts related to independence complexes which we need in the subsequent sections. For a comprehensive introduction to simplicial complexes, we refer to \cite{Kozlov book}. In the latter of this section, we provide an alternative proof of determining the homotopy type of $I(\Gamma_{n,3})$.

A \emph{(finite simple) graph} is a pair $(V,E)$ such that $V$ is a finite set and $E$ is a subset of the family $\binom{V}{2}$ of $2$-element subsets of $V$. A subset $S$ of $V$ is \emph{independent} or \emph{stable} if there are no $v,w \in S$ such that $\{ v,w \} \in E$. The \emph{independence complex of a graph $G$} is the simplicial complex whose vertex set is $V$ and whose simplices are independent sets in $G$, and is denoted by $I(G)$.

A \emph{subgraph of $G = (V,E)$} is a graph $G' = (V', E')$ such that $V' \subset V$ and $E' \subset E$. The subgraph $G'$ of $G$ is said to be \emph{induced} if $E' = V' \cap E$. For a subset $S$ of $V$, we write $G - S$ to mean the induced subgraph of $G$ whose vertex set is $V - S$. Note that if $G'$ is an induced subgraph then there is a natural inclusion from $I(G')$ to $I(G)$.

If $G$ is a disjoint union of two subgraphs $G_1$ and $G_2$, then $I(G)$ coincides with the join $I(G_1) * I(G_2)$. In particular, if $G$ has an isolated vertex, then $I(G)$ is contractible. Since $I(K_2) = S^0$, we have $I(K_2 \sqcup G) \simeq \Sigma I(G)$. Here, $\Sigma$ denotes the suspension.

Let $K$ be an (abstract) simplicial complex and $S$ a subset of $V(K)$. Let $K - S$ denote the simplicial complex consisting of the simplices of $K$ disjoint from $S$. For a face $\sigma$ of $K$, define the \emph{star} $\st(\sigma)$ and the \emph{link} $\lk(\sigma)$ by
\[ \st(\sigma) = \{ \tau \in K \; | \; \sigma \cup \tau \in K \},\; \lk(\sigma) = \st(\sigma) - \sigma.\]
For $v \in V$, we write $\st(v)$ and $\lk(v)$ instead of $\st(\{ v\})$ and $\lk (\{ v\})$, respectively.

\begin{lem}
  Let $K$ be a simplicial complex and $v$ a vertex of $K$.
  Then $K$ is homeomorphic to the mapping cone of the inclusion $\lk(v) \hookrightarrow K - v$,
  and hence there is a cofiber sequence
  \[\lk(v)\to K - v \to K.\]
  In particular, if the inclusion $\lk (v) \hookrightarrow K - v$ is null-homotopic,
  then $K \simeq (K - v) \vee \Sigma  \lk (v)$.
\end{lem}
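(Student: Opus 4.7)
The plan is to realize $K$ as a concrete pushout of simplicial complexes and then, for the final clause, invoke a standard fact about mapping cones of null-homotopic maps. No input from earlier in the paper is needed beyond the definitions of $\st$, $\lk$, and $K - v$ recalled just before the statement.

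For the first part, I would verify directly from the definitions that, as subcomplexes of $K$,
\[ K = (K - v) \cup \st(v), \qquad (K - v) \cap \st(v) = \lk(v). \]
Indeed, every face of $K$ either contains $v$ (so it lies in $\st(v)$) or is disjoint from $\{v\}$ (so it lies in $K - v$), while a face of $\st(v)$ disjoint from $\{v\}$ is exactly a face of $\lk(v)$ by definition. Combined with the observation that $\st(v) = v * \lk(v)$ is the closed cone on $\lk(v)$ with apex $v$, this exhibits $K$ as the pushout of
\[ K - v \hookleftarrow \lk(v) \hookrightarrow v * \lk(v), \]
which is precisely (a homeomorphic model of) the mapping cone of the inclusion $\lk(v) \hookrightarrow K - v$. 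The cofiber sequence $\lk(v) \to K - v \to K$ is then immediate.

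For the final statement, I would invoke the standard fact that the mapping cone of a null-homotopic map $f \colon A \to X$ between CW complexes is homotopy equivalent to $X \vee \Sigma A$: a null-homotopy of $f$ provides a homotopy equivalence $C_f \simeq C_{\mathrm{const}}$, and the mapping cone of a constant map $A \to X$ is tautologically $X \vee \Sigma A$. Applying this to $f$ equal to the inclusion $\lk(v) \hookrightarrow K - v$ gives the desired splitting. The whole argument is essentially bookkeeping; the only point to keep in mind is to match the combinatorial star/link decomposition with the topological mapping cone, which is the content of the pushout identification above.
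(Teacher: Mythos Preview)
Your argument is correct and is exactly the standard star--link decomposition one would expect: the identities $K = (K-v)\cup\st(v)$, $(K-v)\cap\st(v)=\lk(v)$, and $\st(v)=v*\lk(v)$ together exhibit $K$ as the pushout $(K-v)\cup_{\lk(v)} C\lk(v)$, i.e.\ the mapping cone, and the splitting for null-homotopic maps is the usual fact about cofibers. Note, however, that the paper states this lemma without proof, treating it as a well-known background fact (it immediately proceeds to apply it to independence complexes), so there is no proof in the paper to compare your proposal against; your write-up simply supplies the omitted standard argument.
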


Let $G$ be a graph and let $v \in V(G)$. Let $N(v)$ be the set of vertices of $G$ adjacent to $v$, and set $N[v] = N(v) \cup \{ v\}$. It is straightforward to see $I(G) - v = I(G - v)$ and $\lk(v) = I(G - N[v])$.
Applying the above lemma to $K=I(G)$, we have the following:

\begin{thm}[see \cite{Adamaszek1}] \label{thm cofiber}
Let $G$ be a graph and $v$ be a vertex of $G$. Then there is a cofiber sequence
\[ I(G - N[v]) \to I(G - v) \to I(G).\]
In particular, if the inclusion $I(G - N[v]) \to I(G - v)$ is null-homotopic, then $I(G) \simeq I(G - v) \vee \Sigma I(G - N[v])$.
\end{thm}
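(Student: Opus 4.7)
The plan is to obtain Theorem \ref{thm cofiber} as a direct corollary of the preceding lemma, applied to the simplicial complex $K = I(G)$. The entire content beyond the lemma is the identification of the two relevant subcomplexes $K - v$ and $\lk(v)$ with independence complexes of induced subgraphs.

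First I would verify the identification $I(G) - v = I(G - v)$. The faces of $I(G) - v$ are by definition the independent subsets of $V(G)$ that do not contain $v$. Since independence of a set $\sigma \subset V(G)\setminus\{v\}$ in $G$ depends only on edges among $\sigma$, and these edges coincide with the edges of the induced subgraph $G - v$, a subset of $V(G)\setminus\{v\}$ is independent in $G$ if and only if it is independent in $G - v$. Hence the two simplicial complexes have the same simplices.

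Next I would identify $\lk(v)$ in $I(G)$ with $I(G - N[v])$. A set $\tau$ lies in $\lk(v)$ iff $\tau \cup \{v\}$ is independent in $G$ and $v \notin \tau$. Independence of $\tau \cup \{v\}$ forces every vertex of $\tau$ to be non-adjacent to $v$, so $\tau \subset V(G) \setminus N[v]$; and $\tau$ itself must be independent. Conversely, any independent subset of $V(G)\setminus N[v]$ yields such a $\tau$. This gives exactly the faces of $I(G - N[v])$.

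With both identifications in place, the preceding lemma applied to $K = I(G)$ produces the cofiber sequence
\[ I(G - N[v]) \to I(G - v) \to I(G), \]
and the wedge splitting when the first map is null-homotopic is the corresponding conclusion of that lemma, which reflects the standard fact that a mapping cone $C_f$ of a null-homotopic map $f \colon A \to X$ is homotopy equivalent to $X \vee \Sigma A$. There is essentially no obstacle here: the only thing to check is that both identifications of subcomplexes are literal equalities of simplicial complexes (rather than just homotopy equivalences), which is straightforward from the definition of the independence complex and of induced subgraphs.
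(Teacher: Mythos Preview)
Your proof is correct and follows exactly the paper's approach: the paper states just before the theorem that $I(G) - v = I(G - v)$ and $\lk(v) = I(G - N[v])$, and then applies the preceding lemma to $K = I(G)$. Your elaboration of these two identifications is a more detailed version of what the paper calls ``straightforward.''
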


This theorem implies the following simple argument, which will be frequently used.

\begin{cor}[fold lemma \cite{Engstrom}] \label{cor fold lemma}
Let $G$ be a graph, and $v$, $w$ vertices of $G$. Assume that $v \ne w$ and $N(v) \subset N(w)$. Then the inclusion $I(G - w) \hookrightarrow I(G)$ is a homotopy equivalence.
\end{cor}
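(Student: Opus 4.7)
The plan is to apply Theorem \ref{thm cofiber} to the vertex $w$ and show that the link $I(G - N[w])$ is contractible, which will force the inclusion $I(G - w) \hookrightarrow I(G)$ (realized as the attaching of the mapping cone with contractible source) to be a homotopy equivalence.

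First I would verify that $v$ is a vertex of $G - N[w]$. By hypothesis $v \ne w$, so it remains to check $v \notin N(w)$. If $v \in N(w)$, i.e.\ $\{v,w\} \in E$, then $w \in N(v)$, and the assumption $N(v) \subset N(w)$ would give $w \in N(w)$, which is impossible because a graph is simple with no loops. Hence $v \notin N[w]$, so $v \in V(G - N[w])$.

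Next I would check that $v$ is isolated in $G - N[w]$. Any neighbor of $v$ in $G - N[w]$ lies in $N(v) \setminus N[w]$, but $N(v) \subset N(w) \subset N[w]$, so this set is empty. By the remark in Section 2 that independence complexes of graphs with an isolated vertex are contractible, $I(G - N[w])$ is contractible.

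Finally, applying the lemma preceding Theorem \ref{thm cofiber} with $K = I(G)$ and the vertex $w$, the complex $I(G)$ is homeomorphic to the mapping cone of the inclusion $I(G - N[w]) \hookrightarrow I(G - w)$. Since the source is contractible, this mapping cone deformation-retracts onto its base via the canonical inclusion, so $I(G - w) \hookrightarrow I(G)$ is a homotopy equivalence. There is essentially no obstacle here; the only subtle point is ensuring that the conclusion is about the specific inclusion map rather than merely an abstract homotopy equivalence, and this is handled cleanly by the mapping-cone description rather than by the weaker wedge-decomposition formulation in Theorem \ref{thm cofiber}.
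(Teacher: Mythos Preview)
Your proof is correct and follows essentially the same approach as the paper: both show that $v$ survives in $G-N[w]$ as an isolated vertex, making $I(G-N[w])$ contractible, and then invoke the cofiber/mapping-cone description at $w$. Your final paragraph is slightly more careful than the paper's in explicitly tracking that the \emph{inclusion} $I(G-w)\hookrightarrow I(G)$ is the homotopy equivalence (via the deformation retraction of a mapping cone with contractible source onto its base), whereas the paper simply appeals to Theorem~\ref{thm cofiber}; but this is a refinement of presentation, not a different argument.
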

\begin{proof}
We write the proof for the reader's convenience. By Theorem \ref{thm cofiber}, it suffices to see that $v$ is an isolated vertex of $G - N[w]$. If $v \in N[w]$, then $v \ne w$ means that $v \in N(w)$. This means $w \in N(v) \subset N(w)$. This is a contradiction. Hence $v \in V(G - N[w])$. Since $N(v) \subset N(w)$, we have that $v$ is an isolated vertex in $G - N[w]$. This completes the proof.
\end{proof}

The fold lemma immediately determines the homotopy types of $I(\Gamma_{n,1})$ and $I(\Gamma_{n,2})$:

\begin{thm}[Kozlov \cite{Kozlov}] \label{thm nx1}
There is a following homotopy equivalence:
\[ I(\Gamma_{n,1}) = I(P_n) \simeq
\begin{cases}
	S^{k-1} & (n = 3k) \\
	{\rm pt} & (n = 3k + 1) \\
	S^k & (n = 3k + 2).
\end{cases} \]
\end{thm}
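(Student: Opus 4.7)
The plan is to set up a recursion $I(P_n) \simeq \Sigma I(P_{n-3})$ via the fold lemma and then induct on $n$.

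First, I would apply Corollary~\ref{cor fold lemma} with $v = 1$ and $w = 3$, assuming $n \ge 3$. Since $N(1) = \{2\}$ and $2 \in N(3)$, we have $N(1) \subset N(3)$, so the inclusion $I(P_n - 3) \hookrightarrow I(P_n)$ is a homotopy equivalence. For $n \ge 4$, the induced subgraph $P_n - 3$ is the disjoint union $P_2 \sqcup P_{n-3}$, where the $P_2$ sits on $\{1,2\}$ and the $P_{n-3}$ sits on $\{4,\dots,n\}$. Using $I(G_1 \sqcup G_2) = I(G_1) * I(G_2)$ together with $I(P_2) = I(K_2) = S^0$, we obtain
\[
I(P_n) \simeq I(P_2) * I(P_{n-3}) \simeq \Sigma I(P_{n-3}).
\]

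Next I would handle the base cases directly: $I(P_1)$ is a single $0$-simplex, hence contractible; $I(P_2) = I(K_2) = S^0$; and $I(P_3) \simeq I(P_2) = S^0$ by the same fold applied with $n = 3$ (where $P_3 - 3 = P_2$).

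Finally, a straightforward induction on $n$ using $I(P_n) \simeq \Sigma I(P_{n-3})$ yields
\[
I(P_{3k}) \simeq \Sigma^{k-1} I(P_3) \simeq S^{k-1},\quad
I(P_{3k+1}) \simeq \Sigma^{k} I(P_1) \simeq \mathrm{pt},\quad
I(P_{3k+2}) \simeq \Sigma^{k} I(P_2) \simeq S^{k},
\]
matching the three cases of the theorem. There is no real obstacle here; the only thing to be careful about is that the fold with $(v,w)=(1,3)$ requires $n \ge 3$, so the cases $n=1,2$ must be verified separately before starting the induction.
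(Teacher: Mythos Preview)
Your proof is correct and is precisely the argument the paper has in mind: the text states that ``the fold lemma immediately determines the homotopy types of $I(\Gamma_{n,1})$ and $I(\Gamma_{n,2})$'' and gives no further detail, and your fold at $w=3$ yielding the recursion $I(P_n)\simeq\Sigma I(P_{n-3})$ together with the three base cases is exactly how that hint unpacks.
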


\begin{thm}[Adamaszek \cite{Adamaszek2}] \label{thm nx2}
There is a following homotopy equivalence:
\[ I(\Gamma_{n,2}) \simeq \begin{cases}
	S^{k-1} & (n = 2k) \\
	S^k & (n = 2k +1).
\end{cases}\]
\end{thm}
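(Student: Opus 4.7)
The plan is to establish the recursion $I(\Gamma_{n,2}) \simeq \Sigma I(\Gamma_{n-2,2})$ for $n \ge 3$ by two applications of the fold lemma (Corollary \ref{cor fold lemma}), and then to conclude by a straightforward induction from the two base cases $n = 1, 2$.

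First I would dispose of the base cases directly. Since $\Gamma_{1,2} = K_2$, we have $I(\Gamma_{1,2}) = S^0$, which matches the formula at $n = 2 \cdot 0 + 1$. The graph $\Gamma_{2,2}$ is the 4-cycle $C_4$, whose independence complex is the disjoint union of the two ``diagonal'' edges, hence is homotopy equivalent to $S^0$, matching the formula at $n = 2 \cdot 1$.

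For the inductive step, fix $n \ge 3$ and label the vertices $a_i := (i,2)$ and $b_i := (i,1)$. In $\Gamma_{n,2}$ we have $N(b_1) = \{a_1, b_2\} \subset \{a_1, a_3, b_2\} = N(a_2)$, so the fold lemma gives $I(\Gamma_{n,2}) \simeq I(\Gamma_{n,2} - a_2)$. In the graph $\Gamma_{n,2} - a_2$, the vertex $a_1$ has become pendant, with $N(a_1) = \{b_1\} \subset \{b_1, b_3\} = N(b_2)$, so a second application of the fold lemma yields $I(\Gamma_{n,2}) \simeq I(\Gamma_{n,2} - \{a_2, b_2\})$. Removing the rung $\{a_2, b_2\}$ disconnects the ladder: the resulting graph is the disjoint union of the edge $a_1 b_1 \cong K_2$ and the subladder $\Gamma_{n-2, 2}$ on $\{a_3, \ldots, a_n, b_3, \ldots, b_n\}$. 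Applying the fact recalled in Section 2 that $I(K_2 \sqcup G) \simeq \Sigma I(G)$, I obtain
\[
I(\Gamma_{n,2}) \simeq \Sigma I(\Gamma_{n-2,2}),
\]
as desired.

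Induction on $n$ then finishes the argument: each step of the recursion raises the sphere dimension by one and preserves the parity of $n$, so $n = 2k$ gives $\Sigma^{k-1} S^0 = S^{k-1}$ and $n = 2k+1$ gives $\Sigma^k S^0 = S^k$. There is essentially no obstacle here beyond spotting the correct pair of folds; the only mild point is to verify $n \ge 3$ so that $a_3$ and $b_3$ exist when we invoke the inclusions $N(b_1) \subset N(a_2)$ and $N(a_1) \subset N(b_2)$ (the subset relations in fact hold strictly, but equality would already suffice for the fold lemma).
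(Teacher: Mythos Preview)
Your proof is correct and follows precisely the approach the paper indicates: the paper simply remarks that the fold lemma immediately determines the homotopy type of $I(\Gamma_{n,2})$ without spelling out the details, and your argument supplies exactly those details via the recursion $I(\Gamma_{n,2}) \simeq \Sigma I(\Gamma_{n-2,2})$.
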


We need the computation of the homotopy type of $I(C_n)$, where $C_n$ denotes the $n$-cycle graph.

\begin{thm}[Kozlov \cite{Kozlov}] \label{thm cycle}
For $n \ge 3$, there is a following homotopy equivalence:
\[ I(C_n) \simeq \begin{cases}
S^{k-1} \vee S^{k-1} & (n = 3k) \\
S^{k-1} & (n = 3k+1) \\
S^k & (n = 3k + 2).
\end{cases}\]
\end{thm}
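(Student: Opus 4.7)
The plan is to apply Theorem \ref{thm cofiber} to an arbitrary vertex $v$ of $C_n$. Since every vertex of $C_n$ has exactly two neighbors, $C_n - v$ is the path $P_{n-1}$ and $C_n - N[v]$ is the path $P_{n-3}$ (using the convention $P_0 = \emptyset$ when $n = 3$). This yields the cofiber sequence
\[ I(P_{n-3}) \to I(P_{n-1}) \to I(C_n), \]
and it remains to understand the two left-hand terms and the comparison map via Theorem \ref{thm nx1}.

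I would split according to the residue of $n$ modulo $3$. In the case $n = 3k+1$, Theorem \ref{thm nx1} gives $I(P_{n-3}) \simeq \mathrm{pt}$, so the cofiber sequence collapses to $I(C_n) \simeq I(P_{n-1}) \simeq S^{k-1}$. In the case $n = 3k+2$, instead $I(P_{n-1}) \simeq \mathrm{pt}$, so the cofiber sequence collapses to $I(C_n) \simeq \Sigma I(P_{n-3}) \simeq S^{k}$. In both cases a direct indexing check matches the stated formula.

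The delicate case is $n = 3k$, where $I(P_{n-3}) \simeq S^{k-2}$ and $I(P_{n-1}) \simeq S^{k-1}$ are both nontrivial spheres. To conclude $I(C_n) \simeq S^{k-1} \vee S^{k-1}$ from the second clause of Theorem \ref{thm cofiber}, I must verify that the inclusion $I(P_{n-3}) \hookrightarrow I(P_{n-1})$ is null-homotopic. But any map $S^{k-2} \to S^{k-1}$ is null-homotopic by cellular approximation (equivalently, $\pi_{k-2}(S^{k-1}) = 0$), so this step is essentially free. The small edge case $n = 3$ is easiest to handle directly: $I(C_3)$ consists of three isolated points, hence is $S^0 \vee S^0$.

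The only real obstacle is verifying the null-homotopy of the inclusion in the $n = 3k$ case, and this is immediate from the dimension count. Everything else follows mechanically from Theorem \ref{thm cofiber} and Theorem \ref{thm nx1}.
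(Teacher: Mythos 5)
Your argument is correct. The paper itself does not prove this statement---it is quoted from Kozlov \cite{Kozlov} without proof---but your derivation is a sound, self-contained one, and it is exactly in the spirit of the paper's own toolkit (it mirrors the alternative proof of Theorem \ref{thm n x 3} in Section 2: apply Theorem \ref{thm cofiber} to one vertex, feed in the known homotopy types of $I(P_m)$ from Theorem \ref{thm nx1}, and check the connecting inclusion is null-homotopic). All three residue cases check out, including the only nontrivial null-homotopy ($n=3k$, where a map from a space homotopy equivalent to $S^{k-2}$ into one homotopy equivalent to $S^{k-1}$ is null-homotopic; for $k=2$ this is just path-connectedness of $S^1$), and the edge case $n=3$ is handled correctly by inspection.
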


\subsection{$I(\Gamma_{n,3})$}

In this subsection, we provide an alternative proof of the following theorem because our proofs in the subsequent sections are further developments of this proof:

\begin{thm}[Adamaszek \cite{Adamaszek2}] \label{thm n x 3}
For $l = 0,1,2,3$, there is a following homotopy equivalence:
\[ I(\Gamma_{4k+l,3}) \simeq S^{3k+l - 1}.\]
\end{thm}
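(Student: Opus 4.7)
My plan is to introduce an auxiliary graph $\hat\Gamma_{m,3}$, obtained from $\Gamma_{m,3}$ by attaching a single pendant vertex $p$ to $(1,2)$, and to relate $I(\Gamma_{n,3})$ to $I(\hat\Gamma_{n-3,3})$ and $I(\hat\Gamma_{n-4,3})$ by applying Theorem \ref{thm cofiber} at the vertex $v=(1,2)$. The payoff is that $\hat\Gamma_{m,3}$ admits a clean length-four suspension recurrence driven entirely by the fold lemma.

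The first step is to prove $I(\hat\Gamma_{m,3})\simeq\Sigma^3 I(\hat\Gamma_{m-4,3})$ for $m\ge 4$. Since $N(p)=\{(1,2)\}$ is contained in $N((1,1))$, $N((1,3))$, and $N((2,2))$, three applications of Corollary \ref{cor fold lemma} remove $(1,1),(1,3),(2,2)$; this isolates $\{p,(1,2)\}$ as a $K_2$ and turns $(2,1),(2,3)$ into pendants at $(3,1),(3,3)$. Two further rounds of folds, driven successively by the new pendants, remove $(3,2),(4,1),(4,3)$, isolate two more $K_2$'s $\{(2,1),(3,1)\}$ and $\{(2,3),(3,3)\}$, and leave $(4,2)$ as a pendant at $(5,2)$. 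The residual graph is precisely $K_2\sqcup K_2\sqcup K_2\sqcup \hat\Gamma_{m-4,3}$, so the recurrence follows from $I(K_2\sqcup H)\simeq\Sigma I(H)$. A direct check of the base cases $m=0,1,2,3$ (yielding $\mathrm{pt}$, $S^0$, $\mathrm{pt}$, $S^2$ respectively) then gives $I(\hat\Gamma_{m,3})\simeq\mathrm{pt}$ when $m\equiv 0,2\pmod 4$, $I(\hat\Gamma_{4k+1,3})\simeq S^{3k}$, and $I(\hat\Gamma_{4k+3,3})\simeq S^{3k+2}$.

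For $n\ge 4$, I would apply Theorem \ref{thm cofiber} to $\Gamma_{n,3}$ at $v=(1,2)$. The same style of fold computation, carried out inside $\Gamma_{n,3}-v$ and $\Gamma_{n,3}-N[v]$, peels off two isolated $K_2$'s and leaves a copy of $\hat\Gamma_{n-3,3}$ and $\hat\Gamma_{n-4,3}$ respectively, giving
\[
I(\Gamma_{n,3}-v)\simeq\Sigma^2 I(\hat\Gamma_{n-3,3}),\qquad I(\Gamma_{n,3}-N[v])\simeq\Sigma^2 I(\hat\Gamma_{n-4,3}).
\]
Since $n-3$ and $n-4$ lie in different residue classes modulo $4$, exactly one of these two spaces is contractible in each case, so the cofiber sequence collapses to $I(\Gamma_{n,3})\simeq I(\Gamma_{n,3}-v)$ or $I(\Gamma_{n,3})\simeq\Sigma I(\Gamma_{n,3}-N[v])$. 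A short case check over $l\in\{0,1,2,3\}$ delivers $S^{3k+l-1}$ in every case. The small cases $n=1,2,3$ (where $n-4<0$) are handled separately: $n=1$ and $n=2$ follow directly from Theorems \ref{thm nx1} and \ref{thm nx2} (using $\Gamma_{2,3}\cong\Gamma_{3,2}$), while $n=3$ can be reduced by a single fold to $I(C_8)$ and evaluated via Theorem \ref{thm cycle}.

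The main obstacle is the bookkeeping involved in the iterated folds: every vertex removal alters the neighborhoods of its former neighbors, so at each step one has to reverify the hypothesis $N(v)\subset N(w)$ of Corollary \ref{cor fold lemma} and confirm that the collection of $K_2$'s really splits off as a disjoint union from a smaller copy of $\hat\Gamma_{m,3}$. Once the recurrence is proved, the rest of the argument is a straightforward parity-driven case analysis.
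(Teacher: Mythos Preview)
Your proposal is correct and, up to a left--right reflection, is essentially the paper's own proof. Your auxiliary graph $\hat\Gamma_{m,3}$ is isomorphic to the paper's $Y_{m+1}=\Gamma_{m+1}-\{(m+1,1),(m+1,3)\}$, your recurrence $I(\hat\Gamma_m)\simeq\Sigma^3 I(\hat\Gamma_{m-4})$ is the composite of the paper's two relations $I(X_n)\simeq\Sigma^2 I(Y_{n-2})$ and $I(Y_n)\simeq\Sigma I(X_{n-2})$, and your cofiber sequence at $v=(1,2)$ is the mirror of the paper's cofiber sequence $I(X_{n-1})\to I(X_n)\to I(\Gamma_n)$ at $(n,2)$.
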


Throughout this subsection, we write $\Gamma_n$ to mean $\Gamma_{n,3}$, and set $X_n = \Gamma_n - (n,2)$ and $Y_n = \Gamma_n - \{ (n,1), (n,3)\}$. This notation is valid only in this subsection, and in the subsequent sections we use the symbols $\Gamma_n$, $X_n$, and $Y_n$ to indicate other graphs.

We first determine the homotopy type of $I(X_n)$. By the fold lemma (Corollary \ref{cor fold lemma}), we have the following homotopy equivalences:
\begin{eqnarray} \label{eqn 2.1}
I(X_n) \simeq I(K_2 \sqcup K_2 \sqcup Y_{n-2}) = \Sigma^2 I(Y_{n-2}), I(Y_n) \simeq I(K_2 \sqcup X_{n-2}) = \Sigma I(X_{n-2}).
\end{eqnarray}
Hence the homotopy type of $I(X_n)$ is inductively determined by $I(X_1)$, $I(X_2)$, $I(Y_1)$, and $I(Y_2)$. Using the fold lemma, we have
\begin{eqnarray} \label{eqn 2.2}
I(X_1) \simeq \pt, I(X_2) \simeq S^1, I(Y_1) = \pt, I(Y_2) \simeq S^0.
\end{eqnarray}

\begin{lem} \label{lem X_n in section 2}
There is a following homotopy equivalence:
\[ I(X_n) \simeq
\begin{cases}
	{\rm pt} & (\textrm{$n$ is odd}) \\
	S^{3k-1} & (n = 4k) \\
	S^{3k+1} & (n = 4k+2).
\end{cases} \]
\end{lem}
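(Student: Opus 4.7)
The plan is to proceed by a straightforward induction on $n$, iterating the two relations in (\ref{eqn 2.1}) to obtain a single four-step recursion on $I(X_n)$, then matching it to the base cases in (\ref{eqn 2.2}).

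First I would dispose of the small cases $n = 1, 2, 3, 4$ by direct computation. The cases $n = 1$ and $n = 2$ are exactly (\ref{eqn 2.2}). For $n = 3$, the first formula in (\ref{eqn 2.1}) gives $I(X_3) \simeq \Sigma^2 I(Y_1) \simeq \Sigma^2 \mathrm{pt} \simeq \mathrm{pt}$, which matches the statement since $3$ is odd. For $n = 4$, the same formula gives $I(X_4) \simeq \Sigma^2 I(Y_2) \simeq \Sigma^2 S^0 \simeq S^2$, which matches the statement with $k = 1$: $S^{3k-1} = S^2$.

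For the inductive step I would assume $n \ge 5$ and combine the two homotopy equivalences in (\ref{eqn 2.1}) to obtain
\[
I(X_n) \simeq \Sigma^2 I(Y_{n-2}) \simeq \Sigma^2 \Sigma I(X_{n-4}) \simeq \Sigma^3 I(X_{n-4}).
\]
Now the induction hypothesis describes $I(X_{n-4})$, and it suffices to check that suspending three times matches the stated formula for $I(X_n)$. If $n$ is odd, so is $n - 4$, so $I(X_{n-4}) \simeq \mathrm{pt}$ by hypothesis and hence $I(X_n) \simeq \Sigma^3 \mathrm{pt} \simeq \mathrm{pt}$. If $n = 4k$ with $k \ge 2$, then $n - 4 = 4(k-1)$ and $I(X_{n-4}) \simeq S^{3(k-1)-1} = S^{3k-4}$ by hypothesis, so $I(X_n) \simeq \Sigma^3 S^{3k-4} \simeq S^{3k-1}$ as claimed. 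If $n = 4k + 2$ with $k \ge 1$, then $n - 4 = 4(k-1) + 2$ and $I(X_{n-4}) \simeq S^{3(k-1)+1} = S^{3k-2}$, so $I(X_n) \simeq \Sigma^3 S^{3k-2} \simeq S^{3k+1}$.

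There is no real obstacle here: both recursions in (\ref{eqn 2.1}) are already established via the fold lemma, and all that remains is bookkeeping on parities and sphere dimensions. The only point that requires a moment of care is making sure the four base cases cover everything reached by iterating $n \mapsto n - 4$ down into the range $\{1, 2, 3, 4\}$, which they do.
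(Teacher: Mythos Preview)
Your proof is correct and follows essentially the same route as the paper's: derive the four-step recursion $I(X_n)\simeq\Sigma^3 I(X_{n-4})$ from \eqref{eqn 2.1}, then seed the induction with the base cases $n=1,2,3,4$ computed from \eqref{eqn 2.1} and \eqref{eqn 2.2}. You are simply more explicit in verifying each residue class in the inductive step, whereas the paper leaves that bookkeeping to the reader.
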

\begin{proof}
	Using equation \eqref{eqn 2.1}, we have that $I(X_n) \simeq \Sigma^3 I(X_{n-4})$ for $n > 4$, $I(X_3) \simeq \Sigma^2 I(Y_1) = \pt$, and $I(X_4) \simeq \Sigma^2 I(Y_2) \simeq S^1$ by equations \eqref{eqn 2.1} and \eqref{eqn 2.2}. This completes the proof.
\end{proof}

\begin{proof}[Proof of Theorem \ref{thm n x 3}]
	Applying Theorem \ref{thm cofiber} to the vertex $v = (n,2)$ in $\Gamma_n = \Gamma_{n,3}$, we have a cofiber sequence
	\[ I(X_{n-1}) \to I(X_n) \to I(\Gamma_n).\]
	By Lemma \ref{lem X_n in section 2}, we have that $I(X_{n-1}) \to I(X_n)$ is null-homotopic since either $I(X_{n-1})$ or $I(X_n)$ is contractible. Hence we have
	\[
	I(\Gamma_n) \simeq I(X_n) \vee \Sigma I(X_{n-1}).
	\]
	Lemma \ref{lem X_n in section 2} completes the proof.
\end{proof}

\begin{figure}[t]
\begin{picture}(220,100)(0,-20)
\multiput(40,0)(0,20){4}{\circle*{3}}
\multiput(60,0)(0,20){4}{\circle*{3}}
\multiput(80,0)(0,60){2}{\circle*{3}}

\multiput(40,0)(20,0){2}{\line(0,1){60}}
\multiput(80,0)(0,60){2}{\line(-1,0){50}}
\multiput(60,20)(0,20){2}{\line(-1,0){30}}

\put(45,-20){$X_n$}
\put(10,27){$\cdots$}

\multiput(160,0)(0,20){4}{\circle*{3}}
\multiput(180,0)(0,20){4}{\circle*{3}}
\multiput(200,20)(0,20){2}{\circle*{3}}

\multiput(160,0)(20,0){2}{\line(0,1){60}}
\put(200,20){\line(0,1){20}}
\multiput(180,0)(0,60){2}{\line(-1,0){30}}
\multiput(200,20)(0,20){2}{\line(-1,0){50}}

\put(165,-20){$Y_n$}
\put(130,27){$\cdots$}
\end{picture}
\caption{}  \label{figure 4XY}
\end{figure}

\section{$I(\Gamma_{n,4})$}

The goal of this section is to prove Theorem \ref{main thm n x 4}, which determines the precise homotopy type of $I(\Gamma_{n,4})$. Throughout this section, we write $\Gamma_n$ to mean $\Gamma_{n,4}$.

\begin{figure}[b]
\begin{picture}(260,100)(0,-40)
\multiput(20,0)(20,0){5}{\circle*{3}}
\multiput(20,20)(20,0){4}{\circle*{3}}
\multiput(20,40)(20,0){4}{\circle*{3}}
\multiput(20,60)(20,0){5}{\circle*{3}}

\multiput(20,0)(20,0){4}{\line(0,1){60}}
\multiput(100,0)(0,60){2}{\line(-1,0){90}}
\multiput(80,20)(0,20){2}{\line(-1,0){70}}

\put(60,-20){$X_n$}
\put(50,-40){$I(X_n)$}
\put(-10,27){$\cdots$}

\multiput(180,0)(0,20){4}{\circle*{3}}
\multiput(200,0)(0,20){4}{\circle*{3}}
\multiput(220,20)(0,20){2}{\circle*{3}}

\multiput(180,0)(20,0){2}{\line(0,1){60}}
\put(220,20){\line(0,1){20}}
\multiput(200,0)(0,60){2}{\line(-1,0){30}}
\multiput(220,20)(0,20){2}{\line(-1,0){50}}

\multiput(220,0)(0,60){2}{\circle{3}}
\multiput(240,20)(0,20){2}{\circle{3}}

\multiput(240,0)(0,60){2}{\circle*{3}}
\multiput(260,0)(0,60){2}{\circle*{3}}
\multiput(240,0)(0,60){2}{\line(1,0){20}}

\put(155,-20){$Y_{n-2} \sqcup K_2 \sqcup K_2$}
\put(145,-40){$I(Y_{n-2} \sqcup K_2 \sqcup K_2) = \Sigma^2 I(Y_{n-2})$}
\put(150,27){$\cdots$}

\put(120,-20){$\xhookleftarrow{\mathmakebox[15pt]{}}$}
\put(120,-40){$\xhookleftarrow{\mathmakebox[15pt]{\simeq}}$}

\put(120,27){$\xhookleftarrow{\mathmakebox[15pt]{}}$}
\end{picture}
\caption{}  \label{4 X to Y}
\end{figure}

First, define the graphs $X_n$ and $Y_n$ by $X_n = \Gamma_n - \{ (n,2), (n,3)\}$ and $Y_n = \Gamma_n - \{ (n,1), (n,4)\}$ (see Figure \ref{figure 4XY}). The fold lemma implies
\begin{eqnarray}
I(X_n) \simeq \Sigma^2 I(Y_{n-2}),\; I(Y_n) \simeq \Sigma I(X_{n-1}).
\end{eqnarray}
See Figures \ref{4 X to Y} and \ref{4 Y to X}. We first determine the homotopy type of $I(X_n)$.

\begin{figure}[t]
\begin{picture}(260,100)(0,-40)
\multiput(20,0)(20,0){4}{\circle*{3}}
\multiput(20,20)(20,0){5}{\circle*{3}}
\multiput(20,40)(20,0){5}{\circle*{3}}
\multiput(20,60)(20,0){4}{\circle*{3}}

\multiput(20,0)(20,0){4}{\line(0,1){60}}
\multiput(100,20)(0,20){2}{\line(-1,0){90}}
\multiput(80,00)(0,60){2}{\line(-1,0){70}}
\put(100,20){\line(0,1){20}}

\put(60,-20){$Y_n$}
\put(50,-40){$I(Y_n)$}
\put(-10,27){$\cdots$}

\multiput(180,0)(0,20){4}{\circle*{3}}
\multiput(200,0)(0,20){4}{\circle*{3}}
\multiput(220,0)(0,20){4}{\circle*{3}}
\multiput(240,0)(0,60){2}{\circle*{3}}
\multiput(260,20)(0,20){2}{\circle*{3}}
\multiput(240,20)(0,20){2}{\circle{3}}

\multiput(180,0)(20,0){3}{\line(0,1){60}}
\multiput(240,0)(0,60){2}{\line(-1,0){70}}
\multiput(220,20)(0,20){2}{\line(-1,0){50}}
\put(260,20){\line(0,1){20}}

\put(160,-20){$X_{n-1} \sqcup K_2$}
\put(150,-40){$I(X_{n-1} \sqcup K_2) = \Sigma I(X_{n-1})$}
\put(150,27){$\cdots$}

\put(120,27){$\xhookleftarrow{\mathmakebox[15pt]{}}$}

\put(120,-20){$\xhookleftarrow{\mathmakebox[15pt]{}}$}
\put(120,-40){$\xhookleftarrow{\mathmakebox[15pt]{\simeq}}$}
\end{picture}
\caption{}  \label{4 Y to X}
\end{figure}

\begin{lem} \label{lem X_n for n x 4}
There is a following homotopy equivalence:
\[ I(X_n) \simeq
\begin{cases}
S^{n-1} & (n = 3k, 3k+2) \\
* & (n = 3k+1).
\end{cases}\]
In particular, $I(X_n)$ is homotopy equivalent to a wedge of $S^{n-1}$.
\end{lem}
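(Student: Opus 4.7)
The plan is to combine the two homotopy equivalences $I(X_n) \simeq \Sigma^2 I(Y_{n-2})$ and $I(Y_n) \simeq \Sigma I(X_{n-1})$ established just above (3.1) into a single self-recurrence on the $X_n$, and then settle three small base cases by hand. Chaining the two gives
\[
  I(X_n) \simeq \Sigma^2 I(Y_{n-2}) \simeq \Sigma^3 I(X_{n-3}) \qquad (n \ge 4).
\]
Since $\Sigma^3 S^{(n-3)-1} \simeq S^{n-1}$ and suspending a point yields a point, this recurrence preserves both the congruence class of $n$ modulo $3$ and the claimed homotopy type. Hence the full lemma will follow by induction on $n$ once the cases $n \in \{1,2,3\}$ are verified.

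For the base cases I would argue directly. The graph $X_1$ consists of the two isolated vertices $(1,1)$ and $(1,4)$, so $I(X_1) \simeq \pt$, matching the case $n = 3k+1$. Tracing edges, $X_2$ is isomorphic to the path $(2,1)-(1,1)-(1,2)-(1,3)-(1,4)-(2,4) \cong P_6$, so Theorem \ref{thm nx1} gives $I(X_2) \simeq S^1 = S^{2-1}$. For $n = 3$, apply (3.1) once: $I(X_3) \simeq \Sigma^2 I(Y_1)$, and $Y_1$ is the single edge $\{(1,2),(1,3)\}$, i.e.\ a copy of $K_2$, so $I(Y_1) = S^0$ and $I(X_3) \simeq S^2 = S^{3-1}$.

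The inductive step is then trivial: applying $\Sigma^3$ to the conclusion for $n - 3$ yields the conclusion for $n$. The "in particular" clause is automatic, since both a point (the empty wedge) and a single sphere $S^{n-1}$ qualify as wedges of copies of $S^{n-1}$ (zero and one copy, respectively). I do not anticipate any significant obstacle here; once the self-recurrence $I(X_n) \simeq \Sigma^3 I(X_{n-3})$ is in hand, the entire argument is just careful bookkeeping of the three base cases.
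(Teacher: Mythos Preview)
Your proposal is correct and follows essentially the same argument as the paper: chain the two equivalences (3.1) into $I(X_n)\simeq\Sigma^3 I(X_{n-3})$ for $n\ge 4$, then check $n=1,2,3$ directly. The only cosmetic difference is that you identify $X_2\cong P_6$ and $Y_1\cong K_2$ explicitly, whereas the paper just invokes the fold lemma for those base cases.
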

\begin{proof}
The fold lemma implies $I(X_1) \simeq *$, $I(X_2) \simeq S^1$, and $I(X_3) \simeq \Sigma^2 I(Y_1) \simeq S^2$. For $n \ge 4$, we have $I(X_n) \simeq \Sigma^2 I(Y_{n-2}) \simeq \Sigma^3 I(X_{n-3})$. This completes the proof.
\end{proof}

\begin{figure}[b]
\begin{picture}(260,100)(0,-40)
\multiput(40,0)(0,20){4}{\circle*{3}}
\multiput(60,0)(0,20){4}{\circle*{3}}
\multiput(80,0)(0,60){2}{\circle*{3}}
\multiput(100,0)(0,20){3}{\circle*{3}}
\put(100,60){\circle{3}}

\put(100,0){\line(-1,0){70}}
\put(100,0){\line(0,1){40}}
\multiput(60,20)(0,20){2}{\line(-1,0){30}}
\put(80,60){\line(-1,0){50}}

\multiput(40,0)(20,0){2}{\line(0,1){60}}

\put(60,-20){$G_n - v$}
\put(50,-40){$I(G_n - v)$}
\put(10,27){$\cdots$}

\multiput(180,0)(0,20){4}{\circle*{3}}
\multiput(200,0)(0,20){4}{\circle*{3}}
\multiput(220,0)(0,60){2}{\circle*{3}}
\multiput(240,20)(0,20){2}{\circle*{3}}
\multiput(240,0)(0,60){2}{\circle{3}}

\multiput(180,0)(20,0){2}{\line(0,1){60}}
\multiput(220,0)(0,60){2}{\line(-1,0){50}}
\multiput(200,20)(0,20){2}{\line(-1,0){30}}

\put(240,20){\line(0,1){20}}

\put(175,-20){$X_{n-1} \sqcup K_2$}
\put(165,-40){$I(X_{n-1} \sqcup K_2)$}
\put(150,27){$\cdots$}

\put(120,27){$\xhookleftarrow{\mathmakebox[15pt]{}}$}

\put(120,-20){$\xhookleftarrow{\mathmakebox[15pt]{}}$}
\put(120,-40){$\xhookleftarrow{\mathmakebox[15pt]{\simeq}}$}
\end{picture}
\caption{}  \label{figure 4 G - v}
\end{figure}

\begin{lem} \label{lem 2 of n x 4}
The following hold:
\begin{itemize}
	\item[(1)] $I(\Gamma_n)$ is homotopy equivalent to a wedge of $S^{n-1}$.
	\item[(2)] For $n \ge 3$, there is a homotopy equivalence $I(\Gamma_n) \simeq \Sigma I(X_{n-1}) \vee \Sigma^2 I(\Gamma_{n-2})$.
\end{itemize}
\end{lem}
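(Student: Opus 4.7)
The plan is to prove (1) and (2) together by strong induction on $n$. The base cases $n=1,2$ for (1) are handled by Theorems \ref{thm nx1} and \ref{thm nx2}, since $\Gamma_{1,4}=P_4$ and $\Gamma_{2,4}$ is isomorphic to $\Gamma_{4,2}$. For the inductive step at $n\ge 3$, I first establish the reduction $I(\Gamma_n)\simeq I(H_n)$, where $H_n:=\Gamma_n-\{(n-1,2),(n-1,3)\}$ is the graph pictured on the left of Figure \ref{figure 4 G - v}. This follows from two successive applications of Theorem \ref{thm cofiber}: at $v_1=(n-1,2)$, the corner $(n,1)$ has both its grid neighbors $(n,2)$ and $(n-1,1)$ inside $N[v_1]$, so $(n,1)$ is isolated in $\Gamma_n-N[v_1]$ and $\lk(v_1)$ is contractible; a symmetric argument applied to $v_2=(n-1,3)$ in $\Gamma_n-v_1$ isolates the corner $(n,4)$ and collapses the next cofiber sequence as well.

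Next I apply Theorem \ref{thm cofiber} to $H_n$ at the top-right corner $v=(n,4)$, with $N[v]=\{(n,4),(n,3),(n-1,4)\}$. In $H_n-v$ the vertex $(n,3)$ has only one remaining neighbor, namely $(n,2)$, and $\{(n,2)\}\subset N((n,1))=\{(n,2),(n-1,1)\}$, so Corollary \ref{cor fold lemma} folds $(n,1)$ away, leaving $X_{n-1}\sqcup K_2$ with the $K_2$ equal to the edge $\{(n,2),(n,3)\}$ detached from the rest. Hence $I(H_n-v)\simeq\Sigma I(X_{n-1})$. In $H_n-N[v]$ the analogous move folds $(n-1,1)$ (since $N((n,2))=\{(n,1)\}\subset N((n-1,1))=\{(n-2,1),(n,1)\}$), detaching $\{(n,1),(n,2)\}$ from the grid on columns $1,\ldots,n-2$, and thus $I(H_n-N[v])\simeq\Sigma I(\Gamma_{n-2})$.

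It remains to verify that the connecting inclusion $I(H_n-N[v])\hookrightarrow I(H_n-v)$ is null-homotopic, so Theorem \ref{thm cofiber} converts the cofiber sequence into a wedge. By the inductive hypothesis for (1) applied to $\Gamma_{n-2}$, the source $\Sigma I(\Gamma_{n-2})$ is a wedge of $(n-2)$-spheres, hence has CW dimension $n-2$; by Lemma \ref{lem X_n for n x 4}, the target $\Sigma I(X_{n-1})$ is either contractible or $S^{n-1}$, which is $(n-2)$-connected for $n\ge 3$. Standard obstruction theory then forces the map to be null-homotopic, and Theorem \ref{thm cofiber} yields $I(\Gamma_n)\simeq I(H_n)\simeq\Sigma I(X_{n-1})\vee\Sigma^2 I(\Gamma_{n-2})$, which is (2); part (1) then follows at once since both summands are wedges of $(n-1)$-spheres. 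The main technical point is that $v=(n,4)$ does not admit a useful direct fold in $\Gamma_n$, which is why the preliminary passage to $H_n$ via the isolated-corner trick is essential; once that is done, a single fold suffices on each side of the cofiber.
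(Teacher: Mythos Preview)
Your proof is correct and follows essentially the same approach as the paper: the same reduction $I(\Gamma_n)\simeq I(G_n)$ with $G_n=\Gamma_n-\{(n-1,2),(n-1,3)\}$ (your $H_n$), the same cofiber sequence at $v=(n,4)$, the same identifications $I(G_n-v)\simeq\Sigma I(X_{n-1})$ and $I(G_n-N[v])\simeq\Sigma I(\Gamma_{n-2})$, and the same dimension argument for null-homotopy. Your justification of $I(\Gamma_n)\simeq I(H_n)$ via isolated corners in the link is precisely the proof of the fold lemma applied to these vertices, so there is no substantive difference.
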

\begin{proof}
We simultaneously prove both (1) and (2) by induction on $n$. The fold lemma implies $I(\Gamma_1) \simeq \pt$ and $I(\Gamma_2) \simeq S^1$. Hence (1) holds for $n = 1,2$.

Suppose $n \ge 3$. Set $G_n = \Gamma_n - \{ (n-1,2), (n-1,3)\}$ for $n \ge 1$. The fold lemma implies $I(\Gamma_n) \simeq I(G_n)$. Set $v = (n,4)$. Then we have $I(G_n - v) \simeq \Sigma I(X_{n-1})$ (see Figure \ref{figure 4 G - v}) and $I(G_n - N[v]) \simeq \Sigma I(\Gamma_{n-2})$ for $n \ge 3$ (see Figure \ref{figure 4 G - N[v]}). Hence Theorem \ref{thm cofiber} shows that there is a following cofiber sequence:
\begin{eqnarray} \label{eq key cofib}
\Sigma I(\Gamma_{n-2}) \to \Sigma I(X_{n-1}) \to I(\Gamma_n).
\end{eqnarray}
Lemma \ref{lem X_n for n x 4} implies that $\Sigma I(X_{n-1})$ is a wedge of $(n-1)$-spheres, and $\Sigma I(\Gamma_{n-2})$ is a wedge of $(n-2)$-spheres by the inductive hypothesis. Hence the map $\Sigma I(\Gamma_{n-2}) \to I(X_{n-1})$ is null-homotopic, and we have $I(\Gamma_n) \simeq \Sigma I(X_{n-1}) \vee \Sigma^2 I(\Gamma_{n-2})$. This completes the proof of (2) in the case of $n$. Since $\Sigma I(X_{n-1})$ is a wedge of $(n-1)$-spheres and $\Sigma I(\Gamma_{n-2})$ is a wedge of $(n-2)$-spheres, the cofiber sequence \eqref{eq key cofib} implies that $I(\Gamma_n)$ is a wedge of $(n-1)$-spheres. This completes the proof of (1) in the case of $n$.
\end{proof}

\begin{figure}[t]
\begin{picture}(260,100)(0,-40)
\multiput(40,0)(0,20){4}{\circle*{3}}
\multiput(60,0)(0,20){4}{\circle*{3}}
\put(80,0){\circle*{3}}
\put(80,60){\circle{3}}
\multiput(100,0)(0,20){2}{\circle*{3}}
\multiput(100,40)(0,20){2}{\circle{3}}

\multiput(40,0)(20,0){2}{\line(0,1){60}}
\multiput(60,20)(0,20){3}{\line(-1,0){30}}
\put(100,0){\line(-1,0){70}}
\put(100,0){\line(0,1){20}}

\put(40,-20){$G_n - N[v]$}
\put(30,-40){$I(G_n - N[v])$}
\put(10,27){$\cdots$}

\multiput(180,0)(0,20){4}{\circle*{3}}
\multiput(200,0)(0,20){4}{\circle*{3}}
\multiput(220,0)(0,60){2}{\circle{3}}
\multiput(240,40)(0,20){2}{\circle{3}}
\multiput(240,0)(0,20){2}{\circle*{3}}

\multiput(180,0)(20,0){2}{\line(0,1){60}}
\multiput(200,0)(0,20){4}{\line(-1,0){30}}

\put(240,0){\line(0,1){20}}

\put(175,-20){$\Gamma_{n-2} \sqcup K_2$}
\put(165,-40){$I(\Gamma_{n-2} \sqcup K_2)$}
\put(150,27){$\cdots$}

\put(120,27){$\xhookleftarrow{\mathmakebox[15pt]{}}$}

\put(120,-20){$\xhookleftarrow{\mathmakebox[15pt]{}}$}
\put(120,-40){$\xhookleftarrow{\mathmakebox[15pt]{\simeq}}$}
\end{picture}
\caption{} \label{figure 4 G - N[v]}
\end{figure}

\begin{lem} \label{lem 3 of n x 4}
For $n \ge 7$, there is a homotopy equivalence:
\[ I(\Gamma_n) \simeq S^{n-1} \vee S^{n-1} \vee \Sigma^6 I(\Gamma_{n-6}).\]
\end{lem}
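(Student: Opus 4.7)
The plan is to iterate Lemma \ref{lem 2 of n x 4}(2) three times. Since $n \ge 7$, the hypothesis $m \ge 3$ of that lemma holds for each of $\Gamma_n$, $\Gamma_{n-2}$, $\Gamma_{n-4}$, and substituting successively gives
\[
I(\Gamma_n) \simeq \Sigma I(X_{n-1}) \vee \Sigma^3 I(X_{n-3}) \vee \Sigma^5 I(X_{n-5}) \vee \Sigma^6 I(\Gamma_{n-6}).
\]

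Next I would identify each of the three factors $\Sigma^j I(X_m)$ with $(j,m) \in \{(1,n-1),\, (3,n-3),\, (5,n-5)\}$ using Lemma \ref{lem X_n for n x 4}. That lemma gives $I(X_m) \simeq S^{m-1}$ when $m \not\equiv 1 \pmod{3}$ and $I(X_m) \simeq \ast$ otherwise, so in the sphere case $\Sigma^j I(X_m) \simeq S^{m+j-1}$. The key numerical observation is that for each of the three pairs above one has $m + j - 1 = n - 1$, so whenever a factor is a sphere it sits in the common top dimension $n-1$. Moreover, the residues of $n-1$, $n-3$, $n-5$ modulo $3$ are pairwise distinct (they are $n-1$, $n$, $n+1$ modulo $3$), so exactly one of the three factors is contractible and the remaining two each contribute a copy of $S^{n-1}$. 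Wedging these with $\Sigma^6 I(\Gamma_{n-6})$ yields the claimed decomposition.

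No serious obstacle arises; the only point requiring attention is the dimensional coincidence $m + j - 1 = n - 1$ for all three pairs, which is what prevents the $\Sigma^j I(X_m)$ factors from splitting into spheres of mixed dimensions. The hypothesis $n \ge 7$ is exactly what is required to make the third application of Lemma \ref{lem 2 of n x 4}(2), now to $\Gamma_{n-4}$, legal.
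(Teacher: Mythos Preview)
Your proposal is correct and follows exactly the paper's approach: iterate Lemma~\ref{lem 2 of n x 4}(2) three times, then simplify the three $X$-factors via Lemma~\ref{lem X_n for n x 4}. Your explicit residue argument (that $n-1,n-3,n-5$ hit all three residues modulo $3$, so exactly one factor is contractible and the other two are copies of $S^{n-1}$) spells out what the paper leaves implicit in the line ``the last homotopy equivalence follows from Lemma~\ref{lem X_n for n x 4}.''
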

\begin{proof}
By (2) of Lemma \ref{lem 2 of n x 4}, we have
\begin{eqnarray*}
I(\Gamma_n) & \simeq & \Sigma I(X_{n-1}) \vee \Sigma^2 I(\Gamma_{n-2}) \\
& \simeq & \Sigma I(X_{n-1}) \vee \Sigma^3 I(X_{n-3}) \vee \Sigma^4 I(\Gamma_{n-4}) \\
& \simeq & \Sigma I(X_{n-1}) \vee \Sigma^3 I(X_{n-3}) \vee \Sigma^5 I(X_{n-5}) \vee \Sigma^6 I(\Gamma_{n-6}) \\
& \simeq & S^{n-1} \vee S^{n-1} \vee \Sigma^6 I(\Gamma_{n-6}).
\end{eqnarray*}
The last homotopy equivalence follows from Lemma \ref{lem X_n for n x 4}.
\end{proof}

\begin{proof}[Proof of Theorem \ref{main thm n x 4}]
Lemma \ref{lem 3 of n x 4} implies that the homotopy type of $I(\Gamma_n)$ is inductively determined by $I(\Gamma_1), \cdots, I(\Gamma_6)$. Now we determine the homotopy types of these complexes.

The fold lemma (Corollary \ref{cor fold lemma}) implies that
\[ I(\Gamma_1) \simeq \pt, \;  I(\Gamma_2) \simeq S^1.\]
For $n \ge 3$, Lemma \ref{lem X_n for n x 4} and (2) of Lemma \ref{lem 2 of n x 4} implies
\[ I(\Gamma_3) \simeq \Sigma I(X_2) \vee \Sigma^2 I(\Gamma_1) \simeq S^2, \; I(\Gamma_4) \simeq \Sigma I(X_3) \vee \Sigma^2 I(\Gamma_2) \simeq S^3 \vee S^3,\]
\[ I(\Gamma_5) \simeq \Sigma I(X_4) \vee \Sigma^2 I(\Gamma_3) \simeq S^4, \; I(\Gamma_6) \simeq \Sigma I(X_5) \vee \Sigma^2 I(\Gamma_4) \simeq S^5 \vee S^5 \vee S^5.\]
Lemma \ref{lem 3 of n x 4} completes the proof.
\end{proof}

\section{$I(\Gamma_{n,5})$}

The goal of this section is to prove Theorem \ref{main thm}, which determines the precise homotopy type of $I(\Gamma_{n,5})$. Throughout this section, we write $\Gamma_n$ to mean $\Gamma_{n,5}$. Since the proof is fairly intricate, we first explain its outline.

\subsection{Outline of the proof}

\begin{figure}[t]
\begin{picture}(260,120)(0,-20)
\multiput(40,0)(0,20){5}{\circle*{3}}
\multiput(60,0)(0,20){5}{\circle*{3}}

\multiput(80,20)(20,0){2}{\circle*{3}}
\multiput(80,60)(20,0){2}{\circle*{3}}
\multiput(100,0)(20,0){4}{\circle*{3}}
\multiput(100,80)(20,0){4}{\circle*{3}}

\multiput(160,20)(0,20){3}{\circle*{3}}
\multiput(180,20)(20,0){2}{\circle*{3}}
\multiput(180,60)(20,0){2}{\circle*{3}}
\multiput(200,0)(20,0){4}{\circle*{3}}
\multiput(200,80)(20,0){4}{\circle*{3}}
\multiput(260,20)(0,20){3}{\circle*{3}}

\multiput(40,0)(20,0){2}{\line(0,1){80}}
\multiput(60,0)(0,20){5}{\line(-1,0){30}}
\multiput(60,20)(0,40){2}{\line(1,0){40}}
\multiput(100,0)(0,60){2}{\line(0,1){20}}
\multiput(100,0)(0,80){2}{\line(1,0){60}}
\multiput(160,0)(100,0){2}{\line(0,1){80}}
\multiput(200,0)(0,80){2}{\line(1,0){60}}
\multiput(160,20)(0,40){2}{\line(1,0){40}}
\multiput(200,0)(0,60){2}{\line(0,1){20}}

\put(10,37){$\cdots$}

\put(53,-13){\footnotesize $(n,1)$}
\put(140,-20){$A_{n,2}$}
\end{picture}
\caption{} \label{figure A}
\end{figure}

We introduce the graph $A_{n,k}$ recursively by $k$ as follows. First, set $A_{n,0} = \Gamma_n$. Suppose that $A_{n,k}$ has been defined as an induced subgraph of $\Gamma_{n + 5k}$. Then $A_{n,k+1}$ is defined as the induced subgraph of $\Gamma_{n + 5k + 5}$ whose vertex set is
\[ V(A_{n,k}) \cup (\{ n''+1, n''+2, n''+5\} \times \{ 1,3\}) \cup (\{ n''+2, n''+3, n''+4, n''+5 \} \times \{ 1,5\}) \cup \{ (n''+5, 3)\},\]
where $n''$ means $n + 5k$. Figure \ref{figure A} depicts $A_{n,2}$.
In our proof, we need to determine the homotopy type of not only $I(\Gamma_n)$ but also $I(A_{n,k})$.

For $n \ge 3$, let $v$ be the vertex $(n-2,3)$ of $A_{n,k}$. Theorem \ref{thm cofiber} implies that there is a cofiber sequence
\begin{eqnarray} \label{eq principal cofiber}
I(A_{n,k} - N[v]) \to I(A_{n,k} - v) \to I(A_{n,k}).
\end{eqnarray}
We use this cofiber sequence to determine the homotopy type of $I(A_{n,k})$ inductively. We first show the following proposition, which will be proved in Subsection 4.4.

\begin{prop} \label{prop A - N[v]}
Assume $n \ge 6$. Then there is a following homotopy equivalence:
\[ I(A_{n,k} - N[v]) \simeq I(A_{n-5, k+1}).\]
\end{prop}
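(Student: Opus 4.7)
The plan is to prove this homotopy equivalence by applying the fold lemma (Corollary~\ref{cor fold lemma}) in a carefully chosen sequence to $A_{n,k} - N[v]$, ultimately producing a graph isomorphic to $A_{n-5, k+1}$. Since each application of the fold lemma gives a homotopy equivalence of independence complexes, showing that such a reduction exists will suffice.

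First, I would unpack the structure of $A_{n,k} - N[v]$: since $N[v] = \{(n-2,3),(n-3,3),(n-1,3),(n-2,2),(n-2,4)\}$, removing this set carves a "hole" of specific shape near column $n-2$, leaving the four vertices $(n-3,2), (n-3,4), (n-1,2), (n-1,4)$ with degree exactly two. I would then apply four initial folds using these low-degree vertices. For example, $N((n-3,2)) = \{(n-3,1),(n-4,2)\} \subseteq N((n-4,1))$, so Corollary~\ref{cor fold lemma} lets us fold out $(n-4,1)$; analogous folds remove $(n-4,5)$, $(n,1)$, and $(n,5)$. After these four folds, columns $n-4$ and $n$ contain only rows $2,3,4$, while the "hole" now extends symmetrically on both sides of column $n-2$.

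Second, new low-degree vertices appear as a result of these folds (e.g., $(n+1,1)$ becomes a pendant after losing its neighbor $(n,1)$), and I would continue the fold cascade. The cascade propagates both leftward into the main grid (through column $n-5$) and rightward through the first tail chunk of $A_{n,k}$ (columns $n+1$ through $n+5$). The aim is to show that after the cascade stabilizes, the resulting graph coincides with $A_{n-5,k+1}$, whose interface chunk of pattern-columns sits at columns $n-4$ through $n$.

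The main obstacle is the bookkeeping: each fold alters the neighborhoods and enables different follow-up folds, so verifying that the fold cascade terminates precisely in $A_{n-5,k+1}$ (and not in some neighboring graph) requires careful case analysis, and a priori the desired isomorphism is not apparent from the first few folds. I anticipate that the cleanest way to close the argument is symmetric: apply a complementary fold sequence to $A_{n-5,k+1}$ (for instance, starting with the pendant vertex $(n-3,3)$, whose sole neighbor is $(n-4,3)$, to fold out $(n-5,3)$) and then show that both $A_{n,k} - N[v]$ and $A_{n-5,k+1}$ reduce, after their respective fold cascades, to a common graph.
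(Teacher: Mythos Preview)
Your opening folds are legitimate---using $(n-3,2)$ to fold out $(n-4,1)$, and its symmetric counterparts---but two concrete issues prevent the plan from closing.

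First, you have the structure of $A_{n,k}$ wrong in places. By construction (see Figure~\ref{figure A}), column $n+1$ of $A_{n,k}$ carries only rows $2$ and $4$, so the vertex $(n+1,1)$ you invoke does not exist; likewise, in $A_{n-5,k+1}$ column $n-3$ has rows $1,2,4,5$ only and column $n-4$ has rows $2,4$ only, so there is no ``pendant vertex $(n-3,3)$ whose sole neighbor is $(n-4,3)$'' to start your complementary fold sequence on the target side. Since your cascade depends on exactly which low-degree vertices are present, these are not minor slips.

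Second, and more importantly, the fold lemma by itself does not reach $A_{n-5,k+1}$. The paper's fold sequence (which, like yours, begins by removing $(n-4,1)$ and $(n-4,5)$, but then removes $(n-1,2)$ and $(n-1,4)$ rather than $(n,1)$ and $(n,5)$) reduces $A_{n,k}-N[v]$ to the graph $A'=A_{n-5,k+1}\cup\{w\}$ with $w=(n-4,3)$. In $A'$ one has $N(w)=\{(n-5,3),(n-4,2),(n-4,4)\}$, and a direct check shows that no vertex $u\ne w$ of $A'$ satisfies $N(u)\subseteq N(w)$, so $w$ cannot be folded away; nor does $A_{n-5,k+1}$ admit an evident fold that meets any graph obtainable from $A'$. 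The paper therefore does \emph{not} finish by folding: it applies Theorem~\ref{thm cofiber} at the vertex $w$ and shows that $I(A'-N[w])$ is contractible (after a short further fold sequence the vertex $(n-3,2)$ becomes isolated there), which gives $I(A')\simeq I(A'-w)=I(A_{n-5,k+1})$. This contractible-link step is precisely the ingredient your plan lacks; the ``bookkeeping'' you flag as the obstacle is in fact a genuine obstruction, and a tool beyond Corollary~\ref{cor fold lemma} is needed to dispose of the last vertex.
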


By Proposition \ref{prop A - N[v]} and cofiber sequence \eqref{eq principal cofiber}, we have a following cofiber sequence
\begin{eqnarray} \label{main cofiber}
I(A_{n-5, k+1}) \to I(A_{n,k} - v) \to I(A_{n,k}),
\end{eqnarray}
The following proposition, which will be proved in Subsection 4.5, is a key to the whole proof:

\begin{prop} \label{prop null-homotopic}
For $n \ge 6$, the inclusion $I(A_{n-5,k+1}) \simeq I(A_{n,k} - N[v]) \to I(A_{n,k} - v)$ is null-homotopic. Hence
\[ I(A_{n,k}) \simeq I(A_{n,k} - v) \vee \Sigma I(A_{n-5,k+1}).\]
\end{prop}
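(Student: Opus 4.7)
My plan is to deduce the null-homotopy by identifying the homotopy type of $I(A_{n,k}-v)$ precisely enough that cellular approximation applies, in the same spirit as the argument in Lemma \ref{lem 2 of n x 4} for the $n \times 4$ case. Once null-homotopy is established, the splitting $I(A_{n,k})\simeq I(A_{n,k}-v)\vee\Sigma I(A_{n-5,k+1})$ is an immediate consequence of the second conclusion of Theorem \ref{thm cofiber}, so the entire task reduces to controlling $I(A_{n,k}-v)$ well enough to compare it with $I(A_{n-5,k+1})\simeq I(A_{n,k}-N[v])$ from Proposition \ref{prop A - N[v]}.

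The first step is to reduce $A_{n,k}-v$ by repeated applications of the fold lemma (Corollary \ref{cor fold lemma}). With $v=(n-2,3)$ removed, the vertices $(n-2,2)$ and $(n-2,4)$ have strictly smaller neighborhoods than they did in $A_{n,k}$, so I anticipate a cascade of folds along column $n-2$ (then propagating outward into columns $n-1$ and $n-3$) which, at the level of the independence complex, deformation-retracts $A_{n,k}-v$ to a disjoint union of a short path component together with a shorter bumped graph of the form $A_{n',k'}$. Combining these folds with the identity $I(K_2\sqcup H)\simeq\Sigma I(H)$ and the known homotopy type of $I(P_m)$ from Theorem \ref{thm nx1} should then produce an identification of the form $I(A_{n,k}-v)\simeq\Sigma^{a}\,I(A_{n',k'})$ for appropriate $n'<n$, $k'\ge 0$, and some suspension degree $a\ge 1$.

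The second step is to invoke the inductive hypothesis on $n$ to read off sphere dimensions from both sides. By Theorem \ref{main thm} at arguments strictly smaller than $n$, $I(A_{n-5,k+1})$ is a wedge of spheres whose top dimension is explicitly known, and the identified form of $I(A_{n,k}-v)$ from the previous step is likewise a wedge of spheres in a known dimensional range, shifted upward by $a$. Case-by-case with respect to the residue of $n$ modulo $20$, I would verify the key inequality that the smallest sphere dimension appearing in $I(A_{n,k}-v)$ strictly exceeds the largest sphere dimension appearing in $I(A_{n-5,k+1})$. Since the domain is a wedge of spheres, this inequality combined with cellular approximation forces the inclusion to be null-homotopic, completing the argument.

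The principal obstacle I expect is the first step: the required sequence of folds on $A_{n,k}-v$ is not immediate because $v$ is an interior vertex of the grid rather than a boundary vertex, so the fold cascade must be set up with some care. I expect the reduction to split into subcases depending on whether $k=0$ or $k\ge 1$ and on the fine residue of $n$, with the low-$n$ exceptional values $n\in\{1,4,5,9\}$ from Theorem \ref{main thm} handled separately; correspondingly, the dimension comparison in the second step becomes a somewhat lengthy but essentially routine bookkeeping exercise against the explicit formulas in Theorem \ref{main thm}.
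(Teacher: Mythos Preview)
Your overall strategy --- prove null-homotopy by a dimension comparison between wedges of spheres --- is exactly the paper's, but both steps in your execution have genuine gaps. For the first step, the fold reduction of $A_{n,k}-v$ does not produce anything of the form $\Sigma^a I(A_{n',k'})$. With $v=(n-2,3)$ removed, the fold cascade propagates \emph{rightward} through columns $n-1$ and $n$ and eventually disconnects the graph into $X_n\sqcup B_{n,k}$, where $X_n=\Gamma_n-\{(n,2),(n,4)\}$ is a pure grid piece and $B_{n,k}$ is the union of the bumps; this is precisely Proposition~\ref{prop A - v}. Both factors are then computed directly (Lemmas~\ref{lem X} and~\ref{lem B}), so that $I(A_{n,k}-v)$ is contractible for odd $n$ and a single sphere or a wedge of two equal-dimensional spheres for even $n$. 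No smaller $A_{n',k'}$ appears, and the odd-$n$ case of the proposition is thereby already finished.

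For the second step, invoking Theorem~\ref{main thm} is circular: that theorem is deduced \emph{from} the present proposition (via Lemma~\ref{lem 1}, Corollary~\ref{cor 2}, and Propositions~\ref{prop 4} and~\ref{prop 5}), and in any case it only concerns $I(A_{n,0})$, so it says nothing about $I(A_{n-5,k+1})$ for $k\ge 0$. The paper instead packages the needed dimension bound into a separate simultaneous induction (Proposition~\ref{prop key}) over even $n$ and \emph{all} $k$: one proves together that $I(A_{n,k})$ is a wedge of spheres of dimension at most $5k+5l-1$ (if $n=4l$) or $5k+5l+2$ (if $n=4l+2$), and that the inclusion is null-homotopic. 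Since $n-5$ is odd, one has $I(A_{n-5,k+1})\simeq\Sigma I(A_{n-10,k+2})$, and the inductive hypothesis on $A_{n-10,k+2}$ bounds its top sphere dimension strictly below the explicitly known dimension of $I(A_{n,k}-v)$. This comparison only requires the residue of $n$ modulo $4$, not modulo $20$.
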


Hence to determine the homotopy types of $A_{n,k}$, it suffices to determine the homotopy types of $I(A_{n,k} - v)$ for every pair $n$ and $k$, and $I(A_{n,k})$ for $n = 1, \cdots, 5$. The following two propositions, which will be proved in Subsections 4.2 and 4.3, assert that we can determine the homotopy types of these complexes.

\begin{prop} \label{prop A - v}
The following hold:
\begin{enumerate}[$(1)$]
\item Assume $k = 0,1$. Then
\[ I(A_{n,k} - v) \simeq \begin{cases}
{\rm pt} & \textrm{($n$ is odd)} \\
S^{5k + 5l - 1} & \textrm{($n = 4l$)} \\
S^{5k + 5l + 2} & \textrm{($n = 4l+2$).}
\end{cases}\]

\item Assume $k \ge 2$. Then
\[ I(A_{n,k} - v) \simeq \begin{cases}
{\rm pt} & \textrm{($n$ is odd)} \\
S^{5k + 5l - 1} \vee S^{5k+5l-1} & \textrm{($n = 4l$)} \\
S^{5k + 5l + 2} \vee S^{5k+5l+2} & \textrm{($n = 4l+2$).}
\end{cases}\]
\end{enumerate}
\end{prop}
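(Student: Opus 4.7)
The plan is to prove Proposition \ref{prop A - v} by induction on $n$, handling the $k=0,1$ and $k\geq 2$ cases in parallel. The primary tools are the fold lemma (Corollary \ref{cor fold lemma}) and the cofiber sequence of Theorem \ref{thm cofiber}; a further key fact is $I(G_1 \sqcup G_2) = I(G_1) * I(G_2)$, so that taking disjoint union with $m$ copies of $K_2$ realizes an $m$-fold suspension. The dimension jump of $5$ per increment of $l$ (where $n = 4l$ or $4l+2$) and of $k$ strongly suggests that the underlying recursion takes the form of a fivefold suspension peeling off four columns of the main grid or five columns of the tail.

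First, I would establish base cases for small $n \in \{3,4,5,6\}$ for each $k$. For $n = 3$, the removed vertex $v = (1,3)$ lies on the left boundary; the fold lemma applied to the boundary vertices of column $1$ (whose neighborhoods in $A_{3,k} - v$ are contained in those of certain vertices in column $2$) allows the collapse of column $1$, and iterated folding produces an isolated vertex, hence contractibility for every $k$. For even $n \in \{4,6\}$, direct fold computations establish the claimed sphere count, with the $k \ge 2$ cases producing an extra wedge summand of the same dimension that doubles the result.

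Second, for $n \geq 7$ I would prove a recursion of the form
\[
I(A_{n, k} - v) \simeq \Sigma^5 I(A_{n-4, k} - v'),
\]
where $v' = (n-6, 3)$. The strategy is to apply a carefully ordered sequence of fold operations to vertices in columns $n-4$ through $n$, eventually exhibiting $A_{n,k} - v$ as a disjoint union of a smaller $A_{n-4, k} - v'$ together with five copies of $K_2$; alternatively, use Theorem \ref{thm cofiber} combined with a null-homotopy argument in the spirit of the proof of Lemma \ref{lem 2 of n x 4}. An analogous recursion in $k$ (shifting five tail columns at a time) handles the growth of the tail. For odd $n$, the recursion lands in the contractible base case, giving $I(A_{n, k} - v) \simeq \pt$.

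The main obstacle is twofold. First, the fold sequence in the inductive step must be verified with some care, since $A_{n, k}$ has a complicated tail structure (Figure \ref{figure A}) that varies with $k$, and the fold hypothesis $N(\cdot) \subset N(\cdot)$ must be checked at every intermediate graph. Second, the jump from a single sphere ($k = 0,1$) to a wedge of two spheres ($k \geq 2$) is delicate: I expect it to correspond either to the first appearance of a cycle-like subgraph in the tail whose independence complex contributes an extra sphere via Theorem \ref{thm cycle}, or to a cofiber sequence that splits nontrivially once $k \geq 2$. Identifying and verifying this doubling mechanism is likely the most subtle step of the proof.
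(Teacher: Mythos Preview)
Your plan and the paper's proof diverge at the very first move. The paper does not run a two-variable induction on $(n,k)$ at all; instead, a short fold sequence (Figure~\ref{figure A - v}) applied near the junction between $\Gamma_n$ and the tail shows
\[
I(A_{n,k}-v)\;\simeq\; I(X_n\sqcup B_{n,k}) \;=\; I(X_n)\ast I(B_{n,k}),
\]
where $X_n=\Gamma_n-\{(n,2),(n,4)\}$ and $B_{n,k}$ is the tail graph. This one step decouples $n$ from $k$ completely. The $n$-dependence is then Lemma~\ref{lem X} ($I(X_n)$ is contractible for odd $n$ and a single sphere otherwise), and the $k$-dependence is Lemma~\ref{lem B} ($I(B_{n,1})\simeq S^4$, while $I(B_{n,k})\simeq S^{5k-1}\vee S^{5k-1}$ for $k\ge 2$). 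The ``doubling'' you flag as the most subtle point is then immediate: it is just the join of a sphere with a wedge of two equal spheres.

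Your recursion $I(A_{n,k}-v)\simeq\Sigma^5 I(A_{n-4,k}-v')$ is correct as a homotopy equivalence (it follows from the decomposition above together with $I(X_n)\simeq\Sigma^5 I(X_{n-4})$), but the mechanism you propose for producing it is problematic. Folding only in columns $n-4$ through $n$ cannot yield a graph isomorphic to $(A_{n-4,k}-v')\sqcup 5K_2$: the tail is attached at column $n$, and the fold lemma can only delete vertices, so any fold sequence working on those columns will disconnect the tail from the grid rather than reattach it four columns to the left. In practice your fold sequence will land on something of the form $X_{n-4}\sqcup(\text{tail})\sqcup 5K_2$, which is exactly the paper's decomposition in disguise. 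There is also a coordinate inconsistency in your plan: with folds confined to columns $n-4,\dots,n$, the vertex $(n-6,3)$ is untouched, so the resulting graph cannot literally be $A_{n-4,k}-v'$ with $v'=(n-6,3)$.

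So the missing idea is the separation $A_{n,k}-v \rightsquigarrow X_n\sqcup B_{n,k}$. Once you see that, the base cases for all $k$, the recursion, and the doubling for $k\ge 2$ all fall out at once, and no two-variable induction is needed.
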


\begin{prop} \label{prop 1-5}
The following hold:
\begin{enumerate}[$(1)$]
\item \[I(A_{1,k}) \simeq \begin{cases}
S^1 & (k = 0) \\
{\rm pt} & (k \ge 1)
\end{cases}\]

\item
\[ I(A_{2,k}) \simeq \begin{cases}
S^{5k+2} & (k = 0,1) \\
S^{5k+2} \vee S^{5k+2} & (k \ge 2)
\end{cases}\]

\item
\[ I(A_{3,k}) \simeq \begin{cases}
S^{5k+3} & (k=0) \\
S^{5k+3} \vee S^{5k+3} & (k \ge 1)
\end{cases}\]

\item
\[ I(A_{4,k}) \simeq \begin{cases}
S^{5k+4} & (k=0,1) \\
S^{5k+4} \vee S^{5k+4} & (k \ge 2)
\end{cases} \]

\item
\[ I(A_{5,k}) \simeq \begin{cases}
S^{5k+5} & (k=0) \\
S^{5k+5} \vee S^{5k+5} & (k \ge 1)
\end{cases} \]
\end{enumerate}
\end{prop}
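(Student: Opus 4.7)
My plan is to prove Proposition 4.5 by induction on $k$, handling each $n \in \{1,2,3,4,5\}$ separately and mirroring the fold-lemma / cofiber-sequence strategy used for $\Gamma_{n,3}$ in Section 2. For the base case $k = 0$, we have $A_{n,0} = \Gamma_{n,5} \cong \Gamma_{5,n}$, so the asserted homotopy equivalences reduce to previously established results: Theorem \ref{thm nx1} (Kozlov) gives $I(P_5) \simeq S^1$ for $n=1$; Theorem \ref{thm nx2} gives $I(\Gamma_{5,2}) \simeq S^2$ for $n=2$; Theorem \ref{thm n x 3} gives $I(\Gamma_{5,3}) \simeq S^3$ for $n=3$; and Theorem \ref{main thm n x 4} gives $I(\Gamma_{5,4}) \simeq S^4$ for $n=4$. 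The remaining base case $I(\Gamma_{5,5}) \simeq S^5$ I would prove by applying Theorem \ref{thm cofiber} to a central vertex such as $(3,3)$ and performing iterated fold-lemma reductions until both $I(\Gamma_{5,5}-v)$ and $I(\Gamma_{5,5}-N[v])$ are identified; a dimension count then lets us split.

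For the inductive step $k \ge 1$, I would apply Theorem \ref{thm cofiber} to a strategically chosen vertex $v$ in the most recently attached pattern of $A_{n,k}$. A promising choice is $v = (n+5k, 3)$, the middle vertex of the final column, since its horizontal neighbor one column to the left lies in a column where row $3$ is absent from the vertex set, making the neighborhood small and combinatorially transparent. The main work is then to compute $I(A_{n,k}-v)$ and $I(A_{n,k}-N[v])$ by repeatedly applying Corollary \ref{cor fold lemma} to the leaves and low-degree vertices that appear in the trimmed graphs, peeling off $K_2$-components (each contributing a $\Sigma$) until the remaining graph is isomorphic to $A_{n,k-1}$ (or a small modification of it). The goal is equivalences of the form $I(A_{n,k}-N[v]) \simeq \Sigma^{a} I(A_{n,k-1})$ and an analogous formula for $I(A_{n,k}-v)$; these, together with the inductive hypothesis and a null-homotopy of the connecting map (typically forced by a dimension mismatch between the two sides), yield
\[
I(A_{n,k}) \simeq I(A_{n,k}-v) \vee \Sigma I(A_{n,k}-N[v]),
\]
from which the claimed formula can be read off.

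The main obstacle I anticipate is the exceptional case $n=1$ with $k \ge 1$, where the target is $I(A_{1,k}) \simeq \mathrm{pt}$. Contractibility cannot come from a dimension-mismatched wedge splitting, so this case requires either exhibiting an isolated vertex after the fold-lemma reductions or, alternatively, showing that the inclusion $I(A_{1,k}-N[v]) \hookrightarrow I(A_{1,k}-v)$ is a \emph{homotopy equivalence} (so that the cofiber $I(A_{1,k})$ is contractible). A secondary difficulty is the \emph{transition} phenomenon in the statement: for $n=3,5$ the wedge gains a second sphere summand in passing from $k=0$ to $k=1$, while for $n=2,4$ the jump occurs in passing from $k=1$ to $k=2$. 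The induction thus naturally splits into pre-transition base cases (where I would compute $I(A_{n,1})$, and for even $n$ also $I(A_{n,2})$, by hand using Theorem \ref{thm cofiber} and Corollary \ref{cor fold lemma}) and a stable post-transition regime, in which one expects a clean recursion of the shape $I(A_{n,k+1}) \simeq \Sigma^5 I(A_{n,k})$ to hold and carry the induction to all larger $k$.
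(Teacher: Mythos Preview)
Your approach differs substantially from the paper's, and the sketch has a real gap.

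The paper does \emph{not} induct on $k$ at the level of $A_{n,k}$. Instead it chooses the cofiber vertex $v=(n-2,3)$ inside the $\Gamma_n$ portion (for $n=3,4,5$; for $n=1,2$ it uses a direct fold-lemma reduction with no cofiber sequence). With that choice, one of $I(A_{n,k}-v)$, $I(A_{n,k}-N[v])$ is always \emph{contractible} for each fixed $n$, and the surviving piece reduces via the fold lemma to an independence complex of an auxiliary ``tail-only'' graph $B_{n,k}$ or $B'_{n,k}$. The homotopy types of $I(B_{n,k})$ and $I(B'_{n,k})$ are computed independently (Lemma~\ref{lem B}) by a genuine induction on $k$; that is where the $\Sigma^5$ recursion lives. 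So the separation into ``head $X_n$'' and ``tail $B_{n,k}$'' is the organizing idea, and it is absent from your plan.

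Your choice $v=(n+5k,3)$ does give the pleasant reduction $I(A_{n,k}-N[v])\simeq\Sigma^4 I(A_{n,k-1})$. The difficulty is $I(A_{n,k}-v)$: it is not contractible in general, and your hoped-for dimension mismatch need not hold. Concretely, for $n=1$, $k=1$ one computes (by exactly the fold-lemma peeling you describe) that both $I(A_{1,1}-v)$ and $I(A_{1,1}-N[v])$ are homotopy equivalent to $S^5$; the cofiber sequence is $S^5\to S^5\to I(A_{1,1})$, and no dimension argument forces the map to be null-homotopic or an equivalence. You flag this obstacle for $n=1$, but you do not resolve it, and the same phenomenon---$I(A_{n,k}-v)$ landing in the same dimension as $I(A_{n,k}-N[v])$---recurs for other $n$ because the tail peels identically regardless of $n$. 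Moreover, if \emph{both} pieces were nontrivial suspensions of $I(A_{n,k-1})$ with a null-homotopic connecting map, the wedge would have twice as many spheres as $I(A_{n,k-1})$, contradicting the $\Sigma^5$ recursion you are aiming for. In short, the plan as stated cannot produce the stable recursion without either a contractibility statement for $I(A_{n,k}-v)$ (which is false with your $v$) or a degree computation for the connecting map (which you have not supplied). Switching the cofiber vertex into the $\Gamma_n$ part, as the paper does, is what makes one side collapse and the argument close.
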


The rest of this section is organized as follows. Subsections 4.2, 4.3, 4.4, and 4.5 are devoted to the proofs of Propositions \ref{prop A - v}, \ref{prop 1-5}, \ref{prop A - N[v]}, and \ref{prop null-homotopic}, respectively. In Subsection 4.6, we complete the proof of Theorem \ref{main thm}.

\subsection{Proposition \ref{prop A - v}}

The goal of this subsection is to show Proposition \ref{prop A - v}, which determines the homotopy type of $I(A_{n,k} - v)$. Before proceeding the proof, we need to determine the homotopy types of independence complexes of graphs $X_n$ (see Lemma \ref{lem X}) and $B_{n,k}$ (see Lemma \ref{lem B}).

\begin{figure}[t]
\begin{picture}(260,120)(0,-20)
\multiput(40,0)(0,20){5}{\circle*{3}}
\multiput(60,0)(0,20){5}{\circle*{3}}
\multiput(80,0)(0,20){5}{\circle*{3}}
\multiput(100,0)(0,40){3}{\circle*{3}}

\multiput(40,0)(20,0){3}{\line(0,1){80}}
\multiput(100,0)(0,40){3}{\line(-1,0){70}}
\multiput(80,20)(0,40){2}{\line(-1,0){50}}

\put(55,-20){$X_n$}
\put(10,37){$\cdots$}

\multiput(180,0)(0,20){5}{\circle*{3}}
\multiput(200,0)(0,20){5}{\circle*{3}}
\multiput(220,0)(0,20){5}{\circle*{3}}
\multiput(240,20)(0,40){2}{\circle*{3}}

\multiput(180,0)(20,0){3}{\line(0,1){80}}
\multiput(220,0)(0,40){3}{\line(-1,0){50}}
\multiput(240,20)(0,40){2}{\line(-1,0){70}}

\put(195,-20){$Y_n$}
\put(150,37){$\cdots$}


\end{picture}
\caption{} \label{XY}
\end{figure}

For $n \ge 1$, let $X_n$ be the graph $\Gamma_n - \{ (n,2), (n,4)\}$ and $Y_n$ the graph $\Gamma_n - \{ (n,1), (n,3), (n,5)\}$ (see Figure \ref{XY}). We first determine the homotopy type of $I(X_n)$. By the fold lemma (Corollary \ref{cor fold lemma}), it is clear that
\begin{eqnarray} \label{eqn 4.1}
I(X_n) \simeq \Sigma^3 I(Y_{n-2}), \; I(Y_n) \simeq \Sigma^2 I(X_{n-2})
\end{eqnarray}
for $n \ge 3$. Hence for $n \ge 5$, we have
\begin{eqnarray} \label{eqn X}
I(X_n) \simeq \Sigma^5 I(X_{n-4}).
\end{eqnarray}
Thus the homotopy type of $I(X_n)$ is determined by the homotopy types of $I(X_1), \cdots, I(X_4)$. The fold lemma implies
\[ I(X_1) \simeq \pt, I(X_2) \simeq S^2, \;  I(Y_1) \simeq \pt, I(Y_2) \simeq S^1.\]
Hence we have
\[ I(X_3) \simeq \Sigma^3 I(Y_1) \simeq \pt, \; I(X_4) \simeq \Sigma^3 I(Y_2) \simeq S^4.\]
Thus \eqref{eqn X} implies the following lemma:

\begin{lem} \label{lem X}
$I(X_n) \simeq \begin{cases}
\pt & (\textrm{$n$ is odd}) \\
S^{5l - 1} & (n = 4l) \\
S^{5l + 2} & (n = 4l + 2).
\end{cases}$
\end{lem}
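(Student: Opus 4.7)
The plan is to derive Lemma \ref{lem X} as a direct induction on $n$, using the recursion $I(X_n) \simeq \Sigma^5 I(X_{n-4})$ from \eqref{eqn X} together with the base cases $I(X_1) \simeq \pt$, $I(X_2) \simeq S^2$, $I(X_3) \simeq \pt$, and $I(X_4) \simeq S^4$, all of which have already been assembled in the paragraph preceding the statement via the fold lemma and the relations \eqref{eqn 4.1}. Because the recursion advances the index $n$ by $4$ while contributing $5$ suspensions, the four residue classes of $n$ modulo $4$ decouple completely, and each class can be treated in isolation from its base case.

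For $n$ odd, the base cases $n = 1, 3$ give $I(X_n) \simeq \pt$, and for any odd $n \geq 5$ the inductive hypothesis $I(X_{n-4}) \simeq \pt$ combined with \eqref{eqn X} gives $I(X_n) \simeq \Sigma^5 \pt \simeq \pt$. For $n = 4l$ with $l \geq 1$, the base case $l = 1$ reads $I(X_4) \simeq S^{4} = S^{5 \cdot 1 - 1}$, and the inductive step $I(X_{4l}) \simeq \Sigma^{5} I(X_{4(l-1)})$ boosts the sphere dimension by $5$ as $l$ increases by $1$, preserving the formula $S^{5l-1}$. The case $n = 4l + 2$ is handled identically starting from $I(X_2) \simeq S^2 = S^{5 \cdot 0 + 2}$, giving $I(X_{4l+2}) \simeq S^{5l+2}$.

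There is no genuine obstacle here: all the substantive topological work (the fold-lemma reductions yielding \eqref{eqn 4.1} and the four small base-case computations) has already been carried out in the preceding paragraph, so the proof of the lemma itself reduces to a brief arithmetic check on the exponents $5l - 1$ and $5l + 2$ under the recursion. I would present it as a two- or three-line induction with no further commentary.
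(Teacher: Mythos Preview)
Your proposal is correct and matches the paper's approach exactly: the paper also derives the lemma directly from the recursion \eqref{eqn X} together with the base cases $I(X_1),\dots,I(X_4)$ already computed in the preceding paragraph, stating simply that \eqref{eqn X} implies the result. You have merely spelled out the induction over the residue classes modulo $4$ that the paper leaves implicit.
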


Next we determine the homotopy types of the independence complexes of other families of graphs $B_{n,k}$ and $B'_{n,k}$. For $k \ge 1$, define the graph $B_{n,k}$ to be the induced subgraph of $A_{n,k}$ whose vertex set is $V(A_{n,k}) - V(\Gamma_n)$. For $k \ge 1$, define $B'_{n,k}$ to be the induced subgraph of $B_{n,k}$ whose vertex set is
\[ \big( V(B_{n,k}) - V(\Gamma_{n + 5}) \big) \cup \{ (n+5, 2), (n+5, 3), (n+5, 4)\}.\]
 Figures \ref{figure B} and \ref{figure B'} depict the graphs $B_{n,3}$ and $B'_{n,3}$, respectively. Note that the isomorphism types of graphs $B_{n,k}$ and $B'_{n,k}$ only depend on $k$.
 
 \begin{figure}[t]
\begin{picture}(330,120)(0,-20)
\multiput(20,20)(20,0){2}{\circle*{3}}
\multiput(20,60)(20,0){2}{\circle*{3}}
\multiput(40,0)(20,0){4}{\circle*{3}}
\multiput(40,80)(20,0){4}{\circle*{3}}
\multiput(100,20)(0,20){3}{\circle*{3}}
\multiput(120,20)(20,0){2}{\circle*{3}}
\multiput(120,60)(20,0){2}{\circle*{3}}
\multiput(140,0)(20,0){4}{\circle*{3}}
\multiput(140,80)(20,0){4}{\circle*{3}}
\multiput(200,20)(0,20){3}{\circle*{3}}
\multiput(220,20)(20,0){2}{\circle*{3}}
\multiput(220,60)(20,0){2}{\circle*{3}}
\multiput(240,0)(20,0){4}{\circle*{3}}
\multiput(240,80)(20,0){4}{\circle*{3}}
\multiput(300,20)(0,20){3}{\circle*{3}}

\multiput(20,20)(0,40){2}{\line(1,0){20}}
\multiput(40,0)(0,60){2}{\line(0,1){20}}

\multiput(40,0)(100,0){3}{\line(1,0){60}}
\multiput(40,80)(100,0){3}{\line(1,0){60}}
\multiput(100,0)(100,0){3}{\line(0,1){80}}

\multiput(100,20)(0,40){2}{\line(1,0){40}}
\multiput(140,0)(0,60){2}{\line(0,1){20}}

\multiput(200,20)(0,40){2}{\line(1,0){40}}
\multiput(240,0)(0,60){2}{\line(0,1){20}}

\put(160,-20){$B_{n,3}$}

\end{picture}
\caption{} \label{figure B}
\end{figure}

\begin{figure}[b]
\begin{picture}(440,120)(0,-20)
\multiput(100,20)(0,20){3}{\circle*{3}}
\multiput(120,20)(20,0){2}{\circle*{3}}
\multiput(120,60)(20,0){2}{\circle*{3}}
\multiput(140,0)(20,0){4}{\circle*{3}}
\multiput(140,80)(20,0){4}{\circle*{3}}
\multiput(200,20)(0,20){3}{\circle*{3}}
\multiput(220,20)(20,0){2}{\circle*{3}}
\multiput(220,60)(20,0){2}{\circle*{3}}
\multiput(240,0)(20,0){4}{\circle*{3}}
\multiput(240,80)(20,0){4}{\circle*{3}}
\multiput(300,20)(0,20){3}{\circle*{3}}

\multiput(140,0)(100,0){2}{\line(1,0){60}}
\multiput(140,80)(100,0){2}{\line(1,0){60}}
\multiput(200,0)(100,0){2}{\line(0,1){80}}
\put(100,20){\line(0,1){40}}

\multiput(100,20)(0,40){2}{\line(1,0){40}}
\multiput(140,0)(0,60){2}{\line(0,1){20}}

\multiput(200,20)(0,40){2}{\line(1,0){40}}
\multiput(240,0)(0,60){2}{\line(0,1){20}}

\put(205,-20){$B'_{n,3}$}

\end{picture}
\caption{} \label{figure B'}
\end{figure}

\begin{lem} \label{lem B}
The following hold:
\begin{enumerate}[$(1)$]
  \item \[
    I(B'_{n,k}) \simeq
    \begin{cases}
      S^0 & (k = 1) \\
      S^{5k-5}\vee S^{5k-5} & (k\ge 2)
    \end{cases}
    \]
  \item \[
    I(B_{n,k}) \simeq
    \begin{cases}
      S^4 & (k = 1) \\
      S^{5k-1}\vee S^{5k-1} & (k\ge 2)
    \end{cases}
    \]
\end{enumerate}
\end{lem}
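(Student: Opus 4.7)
The plan is to prove parts $(1)$ and $(2)$ together by induction on $k$.

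For the base case $k=1$, the graph $B'_{n,1}$ consists of just the three vertices $(n+5,2),(n+5,3),(n+5,4)$ connected as $P_3$, so Theorem \ref{thm nx1} gives $I(B'_{n,1})\simeq S^0$. For $B_{n,1}$, note that $(n+1,2)$ and $(n+1,4)$ are pendants whose unique neighbors are $(n+2,2)$ and $(n+2,4)$; Corollary \ref{cor fold lemma} then allows me to fold away $(n+2,1)$ and $(n+2,5)$. After these two folds $\{(n+1,2),(n+2,2)\}$ and $\{(n+1,4),(n+2,4)\}$ each become an isolated $K_2$ component, and the remainder collapses to the $9$-vertex path $(n+3,1)\text{-}(n+4,1)\text{-}(n+5,1)\text{-}(n+5,2)\text{-}(n+5,3)\text{-}(n+5,4)\text{-}(n+5,5)\text{-}(n+4,5)\text{-}(n+3,5)\cong P_9$. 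Applying $I(K_2\sqcup H)\simeq\Sigma I(H)$ twice together with Theorem \ref{thm nx1} yields $I(B_{n,1})\simeq\Sigma^2 I(P_9)\simeq S^4$.

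For the inductive step ($k\geq 2$), I would apply Theorem \ref{thm cofiber} at the middle $v=(n+5,3)$ of the $3$-vertex left column of $B'_{n,k}$. Since $N[v]$ equals that entire column, $B'_{n,k}-N[v]\cong B_{n+5,k-1}$, whose homotopy type is known by the inductive hypothesis. To compute $I(B'_{n,k}-v)$, the two resulting pendants $(n+5,2),(n+5,4)$ admit the same fold as in the base case, producing a $\Sigma^2$ factor; I would then apply Theorem \ref{thm cofiber} a second time at the middle of the next full junction column $(n+10,3)$. The corresponding link decomposes as $P_4\sqcup P_4\sqcup B_{n+10,k-2}$ (with the convention $B_{\cdot,0}=\emptyset$ when $k=2$), whose independence complex is contractible because $I(P_4)\simeq\pt$; after further fold-lemma cleanup this identifies $I(B'_{n,k}-v)$ with a suitable suspension of $I(B_{\cdot,k-2})$ or $I(P_m)$ matching the inductive hypothesis combined with Theorem \ref{thm nx1}. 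A dimension count then shows that the connecting map $I(B_{n+5,k-1})\to I(B'_{n,k}-v)$ is null-homotopic, so Theorem \ref{thm cofiber} gives $I(B'_{n,k})\simeq I(B'_{n,k}-v)\vee \Sigma I(B_{n+5,k-1})$, which accounts for the required two top-dimensional spheres in $(1)$. Part $(2)$ follows in parallel: folding the two left pendants of $B_{n,k}$ produces $I(B_{n,k})\simeq\Sigma^2 I(\hat B_{n,k})$, where $\hat B_{n,k}$ consists of two length-three arms attached to a full junction column followed by $B_{n+5,k-1}$, and a further application of Theorem \ref{thm cofiber} at the middle of this junction column concludes by the same analysis.

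The principal obstacle will be bookkeeping the cascade of fold-lemma reductions so that each intermediate graph is recognizable as a previously treated $B_\bullet$, $B'_\bullet$, path $P_m$, or disjoint union thereof. Special care is needed at the transition $k=1\to k=2$, where the number of top-dimensional spheres jumps from one to two; this occurs precisely because, starting at $k=2$, both summands $I(B'_{n,k}-v)$ and $\Sigma I(B_{n+5,k-1})$ (or their analogues for $B_{n,k}$) independently contribute a non-trivial sphere in the correct dimension, and the null-homotopy of the connecting map at this boundary case must be verified separately from the generic inductive step.
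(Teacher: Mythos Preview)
Your outline differs from the paper's, and the inductive step has a concrete gap. The paper first shows, by four fold-lemma reductions, that $I(B_{n,k})\simeq\Sigma^4 I(B'_{n,k})$ for all $k\ge1$, so (2) is an immediate consequence of (1) and there is no simultaneous induction. For (1) the paper splits at the \emph{off-center} vertex $x=(n+5,4)$ rather than the middle $(n+5,3)$: the fold lemma then gives a single-step recursion $I(B'_{n,k}-x)\simeq\Sigma^5 I(B'_{n,k-1}-x)$, whence $I(B'_{n,k}-x)\simeq S^{5k-5}$, and likewise $I(B'_{n,k}-N[x])\simeq\Sigma^4 I(B'_{n,k-1}-x)\simeq S^{5k-6}$. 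Both pieces are single spheres of consecutive dimensions, so null-homotopy and the wedge decomposition are immediate.

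Your central-vertex split produces the correct base cases, but the inductive step as stated does not close. For $k\ge3$ the inductive hypothesis already gives $\Sigma I(B_{n+5,k-1})\simeq S^{5k-5}\vee S^{5k-5}$, which accounts for \emph{both} spheres in $I(B'_{n,k})$; if your claimed splitting $I(B'_{n,k})\simeq I(B'_{n,k}-v)\vee\Sigma I(B_{n+5,k-1})$ holds, this forces $I(B'_{n,k}-v)\simeq\pt$ for $k\ge3$, not a nontrivial suspension of $I(B_{\cdot,k-2})$ or $I(P_m)$ as you suggest. Contractibility here does hold, but it comes from a longer fold cascade that eventually isolates the vertex $(n+15,3)$, not from the nested cofiber at $(n+10,3)$ you sketch (that cofiber only tells you $I(R_k)\simeq I(R_k-(n+10,3))$, and the ``further fold-lemma cleanup'' is doing all the work you have not written down). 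In particular your scheme must bifurcate: for $k=2$ one has $I(B'_{n,2}-v)\simeq\Sigma^2 I(P_{11})\simeq S^5$ contributing one sphere, whereas for $k\ge3$ one has $I(B'_{n,k}-v)\simeq\pt$ contributing none. This distinction and the argument producing the isolated vertex are the missing ingredients.
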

\begin{proof}
  For $k\ge 1$, the fold lemma implies $I(B_{n,k}) \simeq \Sigma^4 I(B'_{n,k})$.
  Hence (1) implies (2). Thus it suffices to show (1).

  Since $B'_{n,1}$ is isomorphic to $P_3$,
  we have $I(B'_{n,1})\simeq I(P_3)\simeq S^0$ by Theorem \ref{thm nx1}.
  Now we assume $k\ge 2$.
  Set $x = (n+5,4)$. Then the fold lemma shows that $I(B'_{n,k} - x) \simeq \Sigma^5 I(B'_{n,k-1} - x)$ (see Figure \ref{figure B' - x}), and the induction on $k$ implies $I(B'_{n,k} - x) \simeq S^{5k-5}$. On the other hand, $I(B'_{n,k} - N[x]) \simeq \Sigma^4 I(B'_{n,k-1} - x) \simeq S^{5k-6}$ (see Figure \ref{figure B' - N[x]}). Hence the inclusion $I(B'_{n,k} - N[x]) \to I(B'_{n,k} - x)$ is null-homotopic. Thus we have
  \begin{eqnarray*} 
    I(B'_{n,k}) \simeq I(B'_{n,k} - x) \vee \Sigma I(B'_{n,k} - N[x]) \simeq S^{5k - 5} \vee S^{5k-5}.
  \end{eqnarray*}
  This completes the proof.
\end{proof}

\begin{figure}[t]
\begin{picture}(390,120)(0,-40)
\multiput(20,20)(0,20){2}{\circle*{3}}
\put(20,60){\circle{3}}
\multiput(40,20)(20,0){2}{\circle*{3}}
\multiput(40,60)(20,0){2}{\circle*{3}}
\multiput(60,0)(20,0){4}{\circle*{3}}
\multiput(60,80)(20,0){4}{\circle*{3}}
\multiput(120,20)(0,20){3}{\circle*{3}}
\multiput(140,20)(20,0){2}{\circle*{3}}
\multiput(140,60)(20,0){2}{\circle*{3}}
\multiput(160,0)(0,80){2}{\circle*{3}}

\put(10,59){\footnotesize $x$}

\put(20,20){\line(0,1){20}\line(1,0){40}}
\put(40,60){\line(1,0){20}}
\multiput(60,0)(0,60){2}{\line(0,1){20}}
\multiput(60,0)(0,80){2}{\line(1,0){60}}
\put(120,0){\line(0,1){80}}
\multiput(120,20)(0,40){2}{\line(1,0){40}}
\multiput(160,0)(0,60){2}{\line(0,1){20}}
\multiput(160,0)(0,80){2}{\line(1,0){10}}

\put(100,-20){$B'_{n,k} - x$}
\put(90,-40){$I(B'_{n,k} - x)$}

\put(175,37){$\cdots$}
\put(200,37){$\xhookleftarrow{\mathmakebox[15pt]{}}$}

\put(200,-20){$\xhookleftarrow{\mathmakebox[15pt]{}}$}
\put(200,-40){$\xhookleftarrow{\mathmakebox[15pt]{\simeq}}$}

\multiput(230,20)(0,20){2}{\circle*{3}}
\put(230,60){\circle{3}}
\multiput(250,60)(20,0){2}{\circle*{3}}
\put(250,20){\circle{3}}
\multiput(270,0)(0,20){2}{\circle*{3}}
\put(270,80){\circle{3}}
\put(290,0){\circle{3}}
\multiput(310,0)(20,0){2}{\circle*{3}}
\multiput(290,80)(20,0){2}{\circle*{3}}
\multiput(330,20)(0,60){2}{\circle{3}}
\multiput(330,40)(0,20){2}{\circle*{3}}

\multiput(350,20)(20,0){2}{\circle*{3}}
\multiput(350,60)(20,0){2}{\circle*{3}}
\multiput(370,0)(0,80){2}{\circle*{3}}

\put(230,20){\line(0,1){20}}
\put(250,60){\line(1,0){20}}
\put(270,0){\line(0,1){20}}
\put(290,80){\line(1,0){20}}
\put(310,0){\line(1,0){20}}
\put(330,40){\line(0,1){20}}
\put(330,60){\line(1,0){40}}
\put(350,20){\line(1,0){20}}
\multiput(370,0)(0,60){2}{\line(0,1){20}}
\multiput(370,0)(0,80){2}{\line(1,0){10}}

\put(385,37){$\cdots$}

\put(250,-20){$(B'_{n,k-1} - x) \sqcup \big( \coprod_5 K_2$ \big)}
\put(270,-40){$\Sigma^5 I(B'_{n,k-1} - x)$}
\end{picture}
\caption{} \label{figure B' - x}
\end{figure}

\begin{figure}[b]
\begin{picture}(390,120)(0,-40)
\multiput(20,40)(0,20){2}{\circle{3}}
\put(40,60){\circle{3}}

\put(9,59){\footnotesize $x$}

\multiput(20,20)(20,0){3}{\circle*{3}}
\put(60,60){\circle*{3}}
\multiput(60,0)(20,0){4}{\circle*{3}}
\multiput(60,80)(20,0){4}{\circle*{3}}
\multiput(120,20)(0,20){3}{\circle*{3}}
\multiput(140,20)(20,0){2}{\circle*{3}}
\multiput(140,60)(20,0){2}{\circle*{3}}
\multiput(160,0)(0,80){2}{\circle*{3}}

\put(20,20){\line(1,0){40}}
\multiput(60,0)(0,60){2}{\line(0,1){20}}
\multiput(60,0)(0,80){2}{\line(1,0){60}}
\put(120,0){\line(0,1){80}}
\multiput(120,20)(0,40){2}{\line(1,0){40}}
\multiput(160,0)(0,60){2}{\line(0,1){20}}
\multiput(160,0)(0,80){2}{\line(1,0){10}}

\put(80,-20){$B'_{n,k} - N[x]$}
\put(70,-40){$I(B'_{n,k} - N[x])$}

\put(175,37){$\cdots$}
\put(200,37){$\xhookleftarrow{\mathmakebox[15pt]{}}$}
\put(200,-20){$\xhookleftarrow{\mathmakebox[15pt]{}}$}
\put(200,-40){$\xhookleftarrow{\mathmakebox[15pt]{\simeq}}$}

\multiput(230,20)(20,0){2}{\circle*{3}}
\multiput(230,40)(0,20){2}{\circle{3}}
\put(250,60){\circle{3}}
\multiput(270,0)(20,0){2}{\circle*{3}}
\put(270,20){\circle{3}}
\multiput(270,60)(0,20){2}{\circle*{3}}
\put(290,80){\circle{3}}
\put(310,0){\circle{3}}
\multiput(310,80)(20,0){2}{\circle*{3}}
\put(330,0){\circle{3}}
\multiput(330,20)(0,20){2}{\circle*{3}}
\put(330,60){\circle{3}}
\multiput(350,20)(20,0){2}{\circle*{3}}
\multiput(350,60)(20,0){2}{\circle*{3}}
\multiput(370,0)(0,80){2}{\circle*{3}}

\put(230,20){\line(1,0){20}}
\put(270,0){\line(1,0){20}}
\put(270,60){\line(0,1){20}}
\put(310,80){\line(1,0){20}}
\put(330,20){\line(0,1){20}}
\put(330,20){\line(1,0){40}}
\put(350,60){\line(1,0){20}}
\multiput(370,0)(0,60){2}{\line(0,1){20}}
\multiput(370,0)(0,80){2}{\line(1,0){10}}
\put(385,37){$\cdots$}

\put(250,-20){$(B'_{n,k-1} - x)\sqcup \big( \coprod_4 K_2$ \big)}
\put(270,-40){$\Sigma^4 I(B'_{n,k-1} - x)$}
\end{picture}
\caption{} \label{figure B' - N[x]}
\end{figure}

\begin{proof}[Proof of Proposition \ref{prop A - v}]
By the fold lemma, one can show that $I(A_{n,0} - v) \simeq I(X_n)$ and $I(A_{n,k} - v) \simeq I(X_n \sqcup B_{n,k}) = I(X_n) * I(B_{n,k})$ (see Figure \ref{figure A - v}). The proof is completed by Lemmas \ref{lem X} and \ref{lem B}.
\end{proof}

\begin{figure}[t]
\begin{picture}(400,80)(0,-15)
\multiput(0,0)(0,15){2}{\circle*{2}}
\multiput(0,45)(0,15){2}{\circle*{2}}
\put(0,30){\circle{2}}
\multiput(15,0)(0,15){5}{\circle*{2}}
\multiput(30,0)(0,15){5}{\circle*{2}}
\multiput(45,15)(15,0){2}{\circle*{2}}
\multiput(45,45)(15,0){2}{\circle*{2}}
\multiput(60,0)(0,60){2}{\circle*{2}}

\multiput(0,0)(0,45){2}{\line(0,1){15}}
\multiput(30,0)(0,60){2}{\line(-1,0){38}}
\multiput(60,15)(0,30){2}{\line(-1,0){68}}
\multiput(15,0)(15,0){2}{\line(0,1){60}}
\multiput(60,0)(0,45){2}{\line(0,1){15}}
\multiput(60,0)(0,60){2}{\line(1,0){8}}
\put(15,30){\line(1,0){15}}

\put(10,-18){\small $A_{n,k} - v$}

\put(76,28){\small $\xhookleftarrow{\mathmakebox[15pt]{}}$}

\multiput(110,0)(0,15){2}{\circle*{2}}
\multiput(110,45)(0,15){2}{\circle*{2}}
\multiput(125,15)(0,30){2}{\circle{2}}
\multiput(125,0)(0,30){3}{\circle*{2}}
\multiput(140,0)(0,15){5}{\circle*{2}}
\multiput(155,15)(15,0){2}{\circle*{2}}
\multiput(155,45)(15,0){2}{\circle*{2}}
\multiput(170,0)(0,60){2}{\circle*{2}}

\multiput(140,0)(0,60){2}{\line(-1,0){38}}
\multiput(110,15)(0,30){2}{\line(-1,0){8}}
\multiput(110,0)(0,45){2}{\line(0,1){15}}
\put(125,30){\line(1,0){15}}
\put(140,0){\line(0,1){60}}
\multiput(140,15)(0,30){2}{\line(1,0){30}}
\multiput(170,0)(0,45){2}{\line(0,1){15}}
\multiput(170,0)(0,60){2}{\line(1,0){8}}

\put(186,28){\small $\xhookleftarrow{\mathmakebox[15pt]{}}$}

\multiput(220,0)(0,15){2}{\circle*{2}}
\multiput(220,45)(0,15){2}{\circle*{2}}
\multiput(235,0)(0,30){3}{\circle*{2}}
\multiput(250,0)(0,30){3}{\circle*{2}}
\multiput(250,15)(0,30){2}{\circle{2}}
\multiput(265,15)(15,0){2}{\circle*{2}}
\multiput(265,45)(15,0){2}{\circle*{2}}
\multiput(280,0)(0,60){2}{\circle*{2}}

\multiput(250,0)(0,60){2}{\line(-1,0){38}}
\multiput(220,15)(0,30){2}{\line(-1,0){8}}
\multiput(220,0)(0,45){2}{\line(0,1){15}}
\put(250,30){\line(-1,0){15}}
\multiput(265,15)(0,30){2}{\line(1,0){15}}
\multiput(280,0)(0,45){2}{\line(0,1){15}}
\multiput(280,0)(0,60){2}{\line(1,0){8}}

\put(294,28){\small $\xhookrightarrow{\mathmakebox[15pt]{}}$}

\multiput(330,0)(0,15){5}{\circle*{2}}
\multiput(345,0)(0,15){5}{\circle*{2}}
\multiput(360,0)(0,30){3}{\circle*{2}}
\multiput(375,15)(0,30){2}{\circle*{2}}
\multiput(390,15)(0,30){2}{\circle*{2}}
\multiput(390,0)(0,60){2}{\circle*{2}}

\multiput(360,0)(0,30){3}{\line(-1,0){38}}
\multiput(345,15)(0,30){2}{\line(-1,0){23}}
\multiput(330,0)(15,0){2}{\line(0,1){60}}
\multiput(375,15)(0,30){2}{\line(1,0){15}}
\multiput(390,0)(0,45){2}{\line(0,1){15}}
\multiput(390,0)(0,60){2}{\line(1,0){8}}

\put(342,-18){\small $X_n \sqcup B_{n,k}$}
\end{picture}
\caption{} \label{figure A - v}
\end{figure}

\subsection{Proposition \ref{prop 1-5}}
The goal of this subsection is to prove Proposition \ref{prop 1-5}, which determines the homotopy types of $I(A_{n,k})$ for $n = 1, \cdots, 5$.

\begin{proof}[Proof of (1) of Proposition \ref{prop 1-5}]
When $k = 0$, $A_{1,k}$ is isomorphic to $\Gamma_{5,1}$ and hence $I(A_{1,0}) \simeq S^1$ follows from Theorem \ref{thm nx1}.

Suppose $k \ge 1$. The fold lemma implies
\[ I(A_{1,k}) \simeq I(A_{1,k} - \{ (2,2), (2,4), (4,1), (4,5), (6,2), (6,4)\}).\]
Since $(6,3)$ is an isolated vertex of $A_{1,k} - \{ (2,2), (2,4), (4,1), (4,5), (6,2), (6,4)\}$, the right of the above is contractible. This completes the proof of (1).
\end{proof}

\begin{proof}[Proof of (2) of Proposition \ref{prop 1-5}]
The case $k = 0$ follows from Theorem \ref{thm nx2}. For $k \ge 1$, the fold lemma implies
\[ I(A_{2,k}) \simeq I(K_2 \sqcup K_2 \sqcup K_2 \sqcup B_{n,k}) = \Sigma^3 I(B_{n,k}).\]
Thus the proof is completed by Lemma \ref{lem B}.
\end{proof}

\begin{proof}[Proof of (3) of Proposition \ref{prop 1-5}]
  Recall $v = (1,3)$ in this case. The fold lemma implies that
  \[ I(A_{3,k}- v) \simeq I(A_{3,k} - \{ (1,3), (2,2), (2,4), (3,2), (1,1)\}),\]
  and $A_{3,k} - \{ (1,3), (2,2), (2,4), (3,2), (1,1) \}$ has isolated vertex $(1,2)$. Hence $I(A_{3,k} - v)$ is contractible and hence $I(A_{3,k}) \simeq \Sigma I(A_{3,k} - N[v])$.
  The fold lemma implies that $I(A_{3,k} - N[v]) \simeq \Sigma^2 I(B'_{n,k+1})$ for $k \ge 0$.
  Hence the proof is completed by Lemma \ref{lem B}.
\end{proof}

\begin{proof}[Proof of (4) of Proposition \ref{prop 1-5}]
The fold lemma implies that
\[ I(A_{n,k} - N[v]) \simeq I(A_{4,k} - (N[v] \cup \{ (2,1), (2,5), (4,2), (4,4)\})).\]
Since $(4,3)$ is an isolated vertex of $A_{4,k} - (N[v] \cup \{ (2,1), (2,5), (4,2), (4,4)\}$, this means that $I(A_{4,k} - N[v])$ is contractible. Thus Theorem \ref{thm cofiber} implies that $I(A_{4,k}) \simeq I(A_{4,k} - v)$. It follows from the fold lemma that $I(A_{4,0} - v) \simeq S^4$ and $I(A_{4,k}) \simeq \Sigma^5 I(B_{4,k})$ for $k \ge 1$. The proof is completed by Lemma \ref{lem B}.
\end{proof}

\begin{figure}[b]
\begin{picture}(260,120)(0,-20)
\multiput(60,0)(0,20){5}{\circle*{3}}
\multiput(-20,0)(0,20){5}{\circle*{3}}
\multiput(0,0)(20,0){3}{\circle*{3}}
\multiput(0,80)(20,0){3}{\circle*{3}}

\multiput(80,20)(20,0){2}{\circle*{3}}
\multiput(80,60)(20,0){2}{\circle*{3}}
\multiput(100,0)(20,0){4}{\circle*{3}}
\multiput(100,80)(20,0){4}{\circle*{3}}

\multiput(160,20)(0,20){3}{\circle*{3}}
\multiput(180,20)(20,0){2}{\circle*{3}}
\multiput(180,60)(20,0){2}{\circle*{3}}
\multiput(200,0)(20,0){4}{\circle*{3}}
\multiput(200,80)(20,0){4}{\circle*{3}}
\multiput(260,20)(0,20){3}{\circle*{3}}

\put(-20,0){\line(0,1){80}}
\multiput(-20,0)(0,80){2}{\line(1,0){80}}

\put(60,0){\line(0,1){80}}
\multiput(60,20)(0,40){2}{\line(1,0){40}}
\multiput(100,0)(0,60){2}{\line(0,1){20}}
\multiput(100,0)(0,80){2}{\line(1,0){60}}
\multiput(160,0)(100,0){2}{\line(0,1){80}}
\multiput(200,0)(0,80){2}{\line(1,0){60}}
\multiput(160,20)(0,40){2}{\line(1,0){40}}
\multiput(200,0)(0,60){2}{\line(0,1){20}}
\multiput(260,20)(0,40){2}{\line(1,0){10}}
\put(280,37){$\cdots$}

\put(120,-23){$W_k$}
\end{picture}
\caption{} \label{figure W}
\end{figure}

\begin{proof}[Proof of (5) of Proposition \ref{prop 1-5}]
Since $I(A_{5,k} - v) \simeq {\rm pt}$, we have $I(A_{n,k}) \simeq \Sigma I(A_{5,k}- N[v])$. The fold lemma implies that $I(A_{5,k} - N[v])$ is isomorphic to the graph $W_k$ depicted in Figure \ref{figure W}, and hence
\begin{eqnarray} \label{W}
I(A_{5,k}) & \simeq & \Sigma I(W_k)
\end{eqnarray}

Set $u = (1,3)$. Then
\[ I(W_k - N[v]) \simeq I(W_k - (N[v] \cup \{ (3,1), (3,5), (5,2), (5,3)\})).\]
Since $(5,3)$ is an isolated vertex in $W_k - (N[v] \cup \{ (3,1), (3,5), (5,2), (5,3)\})$, we have $I(W_k - N[v]) \simeq {\rm pt}$. This shows
\[ I(W_k) \simeq I(W_k - u).\]
The fold lemma shows that $I(W_k - u) \simeq \Sigma^4 I(B'_{5,k+1})$. Lemma \ref{lem B} completes the proof.
\end{proof}

\subsection{Proposition \ref{prop A - N[v]}}

\begin{figure}[t]
\begin{picture}(330,120)(0,-40)
\multiput(40,0)(20,0){6}{\circle*{3}}
\multiput(40,20)(20,0){3}{\circle*{3}}
\multiput(40,40)(20,0){2}{\circle*{3}}
\multiput(80,40)(20,0){3}{\circle{3}}
\multiput(40,60)(20,0){3}{\circle*{3}}
\multiput(40,80)(20,0){6}{\circle*{3}}
\multiput(100,20)(0,40){2}{\circle{3}}
\multiput(140,20)(0,20){3}{\circle*{3}}
\multiput(140,20)(0,40){2}{\line(1,0){10}}

\multiput(40,0)(20,0){2}{\line(0,1){80}}
\multiput(80,0)(0,60){2}{\line(0,1){20}}
\put(140,0){\line(0,1){80}}
\multiput(80,20)(0,40){2}{\line(-1,0){50}}
\put(60,40){\line(-1,0){30}}

\multiput(140,0)(0,80){2}{\line(-1,0){110}}

\multiput(120,20)(0,40){2}{\circle*{3}}
\multiput(120,20)(0,40){2}{\line(1,0){20}}
\multiput(120,0)(0,60){2}{\line(0,1){20}}

\put(55,-20){$A_{n,k} - N[v]$}
\put(45,-40){$I(A_{n,k} - N[v])$}

\put(10,37){$\cdots$}

\multiput(210,0)(0,20){5}{\circle*{3}}
\multiput(230,0)(0,80){2}{\circle{3}}
\multiput(230,20)(0,20){3}{\circle*{3}}
\multiput(250,0)(20,0){4}{\circle*{3}}
\multiput(250,80)(20,0){4}{\circle*{3}}
\multiput(250,20)(0,40){2}{\circle*{3}}
\multiput(310,20)(0,20){3}{\circle*{3}}

\multiput(310,20)(0,40){2}{\line(1,0){10}}

\multiput(290,20)(0,40){2}{\circle{3}}

\multiput(210,0)(100,0){2}{\line(0,1){80}}
\put(230,20){\line(0,1){40}}
\multiput(250,0)(0,60){2}{\line(0,1){20}}
\multiput(210,20)(0,40){2}{\line(1,0){40}}
\multiput(250,0)(0,80){2}{\line(1,0){60}}
\put(230,40){\line(-1,0){20}}
\multiput(210,0)(0,20){5}{\line(-1,0){10}}

\put(240,-20){$A'$}
\put(230,-40){$I(A')$}
\put(180,37){$\cdots$}

\put(155,37){$\xhookleftarrow{\mathmakebox[15pt]{}}$}
\put(155,-20){$\xhookleftarrow{\mathmakebox[15pt]{}}$}
\put(155,-40){$\xhookleftarrow{\mathmakebox[15pt]{\simeq}}$}
\put(235,38){\footnotesize $w$}

\end{picture}
\caption{} \label{figure A'}
\end{figure}

The goal of this subsection is to prove Proposition \ref{prop A - N[v]}, which states that $I(A_{n,k} - N[v]) \simeq I(A_{n-5, k+1})$ for $n \ge 6$.

\begin{proof}[Proof of Proposition \ref{prop A - N[v]}]
Set $w = (n-4, 3)$. The fold lemma implies that $I(A_{n,k} - N[v])$ is homotopy equivalent to the independence complex of the induced subgraph $A'$ of $A_{n,k}$ whose vertex set is $V(A_{n-5,k+1}) \cup \{ w \}$ (see Figure \ref{figure A'}). Since $A' - w = A_{n-5, k+1}$, it suffices to show that $I(A' - N[w])$ is contractible (Theorem \ref{thm cofiber}).

The fold lemma implies
\[ I(A' - N[w]) \simeq I(A' - (N[w] \cup \{ (n-2,5), (n,4), (n,1), (n-3,1) \})).\]
Since $(n-3,2)$ is an isolated vertex of $A' - (N[w] \cup \{ (n-2,5), (n,4), (n,1), (n-3,1) \}$, we conclude that $I(A' - N[w])$ is contractible (see Figure \ref{figure A' - N[w]}).
\end{proof}

\subsection{Proposition \ref{prop null-homotopic}}

In this subsection, we prove Proposition \ref{prop null-homotopic}, which states that the map $I(A_{n-5,k+1}) \simeq I(A_{n,k} - N[v]) \hookrightarrow I(A_{n,k} - v)$ is null-homotopic for $n \ge 6$. Note that $I(A_{n,k} - v) \simeq {\rm pt}$ when $n$ is odd, and that in this case $I(A_{n,k} - N[v]) \to I(A_{n,k} - v)$ is null-homotopic. Hence we prove only in the case that $n$ is even.

\begin{figure}[t]
\begin{picture}(330,120)(0,-20)
\put(-10,37){$\cdots$}

\multiput(20,0)(0,20){2}{\circle*{3}}
\put(20,40){\circle{3}}
\multiput(20,60)(0,20){2}{\circle*{3}}
\multiput(40,20)(0,20){3}{\circle{3}}
\multiput(60,0)(0,20){2}{\circle*{3}}
\multiput(60,60)(0,20){2}{\circle*{3}}
\multiput(80,0)(20,0){3}{\circle*{3}}
\multiput(80,80)(20,0){3}{\circle*{3}}
\multiput(120,20)(0,20){3}{\circle*{3}}

\multiput(20,0)(0,20){2}{\line(-1,0){10}}
\multiput(20,0)(0,60){2}{\line(0,1){20}}
\multiput(20,60)(0,20){2}{\line(-1,0){10}}
\multiput(60,0)(0,60){2}{\line(0,1){20}}
\multiput(60,0)(0,80){2}{\line(1,0){60}}
\put(120,0){\line(0,1){80}}
\multiput(120,20)(0,40){2}{\line(1,0){10}}

\put(135,37){$\cdots$}

\put(40,-20){$A' - N[w]$}

\put(155,37){$\xhookleftarrow{\mathmakebox[15pt]{}}$}

\put(180,37){$\cdots$}

\multiput(210,0)(0,20){2}{\circle*{3}}
\multiput(210,60)(0,20){2}{\circle*{3}}
\multiput(250,60)(0,20){2}{\circle*{3}}
\put(270,80){\circle{3}}
\multiput(290,80)(20,0){2}{\circle*{3}}
\put(310,60){\circle{3}}
\multiput(310,20)(0,20){2}{\circle*{3}}
\put(310,0){\circle{3}}
\multiput(270,0)(20,0){2}{\circle*{3}}
\put(250,0){\circle{3}}
\put(250,20){\circle*{3}}

\multiput(210,0)(0,20){2}{\line(-1,0){10}}
\multiput(210,60)(0,20){2}{\line(-1,0){10}}
\multiput(210,0)(0,60){2}{\line(0,1){20}}
\put(250,60){\line(0,1){20}}
\put(290,80){\line(1,0){20}}
\put(310,20){\line(0,1){20}\line(1,0){10}}
\put(270,0){\line(1,0){20}}

\put(330,37){$\cdots$}
\end{picture}
\caption{} \label{figure A' - N[w]}
\end{figure}

\begin{prop} \label{prop key}
Assume that $n$ is even. Then the following hold:
\begin{enumerate}[$(1)$]
\item If $n = 4l$, $I(A_{4l,k})$ is homotopy equivalent to a wedge of spheres whose dimension is at most $5k+5l-1$. If $n = 4l+2$, then $I(A_{4l+2,k})$ is homotopy equivalent to a wedge of spheres whose dimension is at most $5k+5l+2$.

\item The inclusion $I(A_{n,k} - N[v]) \to I(A_{n,k} - v)$ is null-homotopic.
\end{enumerate}
\end{prop}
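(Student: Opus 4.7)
My plan is to prove (1) and (2) simultaneously by strong induction on the even integer $n \ge 2$. The base cases $n \in \{2, 4\}$ are immediate from Proposition \ref{prop 1-5}(2),(4): $I(A_{2,k})$ is (a wedge of) $S^{5k+2}$ and $I(A_{4,k})$ is (a wedge of) $S^{5k+4}$, matching the bounds $5k + 5\cdot 0 + 2$ and $5k + 5\cdot 1 - 1$; for $n = 4$, (2) holds because the proof of Proposition \ref{prop 1-5}(4) already shows $I(A_{4,k} - N[v]) \simeq \pt$, while for $n = 2$ it is vacuous.

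For the inductive step, fix even $n \ge 6$ and assume (1) for every even $n' < n$. I first establish (2). By Proposition \ref{prop A - N[v]}, $I(A_{n,k} - N[v]) \simeq I(A_{n-5, k+1})$, and by Proposition \ref{prop A - v}, $I(A_{n,k} - v)$ is a wedge of $d$-spheres with $d = 5k+5l-1$ if $n = 4l$ and $d = 5k+5l+2$ if $n = 4l+2$. Such a wedge is $(d-1)$-connected, so by standard CW obstruction theory it suffices to exhibit $I(A_{n-5, k+1})$ as a wedge of spheres of dimension at most $d-1$.

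Since $n - 5$ is odd, (1) does not apply directly. I would set up an auxiliary reduction: Proposition \ref{prop A - v} gives $I(A_{n',k'} - v) \simeq \pt$ whenever $n'$ is odd, so Theorem \ref{thm cofiber} combined with Proposition \ref{prop A - N[v]} yields $I(A_{n',k'}) \simeq \Sigma I(A_{n'-5, k'+1})$ for every odd $n' \ge 7$. Taking $n' = n-5$, $k' = k+1$ gives $I(A_{n-5, k+1}) \simeq \Sigma I(A_{n-10, k+2})$ whenever $n \ge 12$, with $n-10$ even and strictly less than $n$. A short case analysis on $n \bmod 4$ using the inductive hypothesis then gives the required bound: if $n = 4l$, then $n - 10 = 4(l-3) + 2$, so $\dim I(A_{n-10, k+2}) \le 5k + 5l - 3$ and hence $\dim I(A_{n-5, k+1}) \le 5k+5l-2 = d-1$; if $n = 4l+2$, then $n - 10 = 4(l-2)$, so $\dim I(A_{n-10, k+2}) \le 5k + 5l - 1$ and hence $\dim I(A_{n-5, k+1}) \le 5k + 5l \le d - 1$. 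The remaining small cases $n \in \{6, 8, 10\}$ correspond to $n-5 \in \{1, 3, 5\}$ and are handled directly by Proposition \ref{prop 1-5}.

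With (2) established, Theorem \ref{thm cofiber} yields $I(A_{n,k}) \simeq I(A_{n,k} - v) \vee \Sigma I(A_{n-5, k+1})$; both summands are wedges of spheres (the first by Proposition \ref{prop A - v}, the second as the suspension of the decomposition obtained above) of dimension at most $d$, which proves (1). The main obstacle will be the $n \bmod 4$ bookkeeping in the dimension bounds: the odd recursion $I(A_{n',k'}) \simeq \Sigma I(A_{n'-5, k'+1})$ together with its iteration must tighten the inductive estimate by exactly the right amount so that $I(A_{n-5, k+1})$ lands in dimension $\le d-1$ in every residue class, a direct but finicky calculation that hinges on the precise constants $-1$ and $+2$ appearing in Proposition \ref{prop A - v} lining up with the shift in $k$ and decrement in $l$ coming from the reduction $(n,k) \mapsto (n-10, k+2)$.
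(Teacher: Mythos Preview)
Your proposal is correct and follows essentially the same approach as the paper: a simultaneous induction on even $n$, using the odd-step reduction $I(A_{n-5,k+1}) \simeq \Sigma I(A_{n-10,k+2})$ (valid for $n \ge 12$) together with the dimension bounds from Proposition~\ref{prop A - v} to force null-homotopy, handling $n \in \{6,8,10\}$ separately via Proposition~\ref{prop 1-5}. The only differences are cosmetic: you make the base cases $n=2,4$ explicit and phrase the null-homotopy via connectivity/obstruction language, whereas the paper starts the induction at $n=6$ and appeals directly to the fact that a map from a wedge of lower-dimensional spheres into a wedge of higher-dimensional spheres is null-homotopic.
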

\begin{proof}
We simultaneously prove (1) and (2) by induction on $n$. Since the cases $n = 6,8,10$ can be directly proved by Propositions \ref{prop A - N[v]}, \ref{prop A - v}, and \ref{prop 1-5}, we assume that $n \ge 12$. Then we have a cofiber sequence
\[ I(A_{n-5,k+1}) \to I(A_{n,k} - v) \to I(A_{n,k})\]
and
\[ I(A_{n-10,k+2}) \to I(A_{n-5,k+1} - v) \to I(A_{n-5,k+1}).\]
Since $n-5$ is odd, we have $I(A_{n-5,k+1} - v) \simeq {\rm pt}$, and hence we have $I(A_{n-5, k+1}) \simeq \Sigma I(A_{n-10,k+2})$. To prove (2), it suffices to see that every map from $\Sigma I(A_{n-10, k+2})$ to $I(A_{n,k} - v)$ is null-homotopic.

Suppose that $n = 4l$. Then Proposition \ref{prop A - v} implies that $I(A_{n,k} - v) = I(A_{4l,k} - v)$ is a wedge of spheres whose dimension is $5k+5l-1$. It follows from the inductive hypothesis that $\Sigma I(A_{n-10,k+2}) = \Sigma I(A_{4(l-3)+2, k+2})$ is a wedge of spheres whose dimension is at most
\[ 5(l-3) + 5(k+2) + 2 + 1 = 5k + 5l - 2.\]
Hence every map from $\Sigma I(A_{4l-10,k+2})$ to $I(A_{4l,k} - v)$ is null-homotopic, and we have proved (2) in the case $n = 4l$. Since $I(A_{4l,k}) \simeq I(A_{4l,k} - v) \vee \Sigma^2 I(A_{4l - 10, k+2})$, we conclude that $I(A_{4l,k})$ is a wedge of spheres whose dimension is at most $5k+5l-1$. This completes the proof of (1) in the case $n = 4l$.

Next suppose that $n = 4l + 2$. Then Proposition \ref{prop A - v} implies that $I(A_{n,k} - v) = I(A_{4l+2,k})$ is a wedge of spheres whose dimension is $5k+5l+2$. It follows from the inductive hypothesis that $\Sigma I(A_{n -10, k+2}) = \Sigma I(A_{4(l-2), k+2})$ is a wedge of spheres whose dimension is at most
\[ 5(l-2) + 5(k+2) - 1 + 1 = 5k + 5l.\]
Hence we have proved (2) in the case $n = 4l+2$. Since
\[ I(A_{4l+2,k}) \simeq I(A_{4l+2,k} - v) \vee \Sigma^2 I(A_{4l-8, k+2}),\]
we conclude that $I(A_{4l+2,k})$ is a wedge of spheres whose dimension is at most $5k+5l+2$. This completes the proof of (1) in the case $n = 4l+2$, and we have completed the proof of this proposition.
\end{proof}

\begin{proof}[Proof of Proposition \ref{prop null-homotopic}]
The proof is completed by Propositions \ref{prop A - N[v]} and \ref{prop key}.
\end{proof}

\subsection{Proof of the main theorem}

In this subsection we complete the proof of Theorem \ref{main thm}, using the results in the previous subsections.

\begin{lem} \label{lem 1}
The following hold:
\begin{enumerate}[(1)]
\item If $n$ is odd and $n \ge 7$, then $I(A_{n,k}) \simeq \Sigma I(A_{n-5,k+1})$.

\item If $n$ is even and $n \ge 12$, then $I(A_{n,k}) \simeq I(A_{n,k} - v) \vee \Sigma^2 I(A_{n-10,k+2})$.
\end{enumerate}
\end{lem}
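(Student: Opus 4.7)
The plan is to derive both parts directly from Proposition \ref{prop null-homotopic}, using Proposition \ref{prop A - v} to collapse a summand in the odd case and then invoking part (1) to bootstrap into part (2). The only ``moves'' available here are the basic splitting from Proposition \ref{prop null-homotopic} and the homotopy computation of $I(A_{n,k} - v)$ from Proposition \ref{prop A - v}; the statement is arranged precisely so that these two ingredients suffice.

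For (1), I would fix odd $n \ge 7$ and apply Proposition \ref{prop null-homotopic} (valid since $n \ge 6$) to obtain
\[
I(A_{n,k}) \simeq I(A_{n,k} - v) \vee \Sigma I(A_{n-5,k+1}).
\]
Both cases of Proposition \ref{prop A - v} — the range $k \in \{0,1\}$ and the range $k \ge 2$ — give $I(A_{n,k} - v) \simeq \mathrm{pt}$ when $n$ is odd, so the first wedge summand collapses and the claimed homotopy equivalence $I(A_{n,k}) \simeq \Sigma I(A_{n-5,k+1})$ follows immediately.

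For (2), I would take even $n \ge 12$, so that $n - 5 \ge 7$ is odd. Applying Proposition \ref{prop null-homotopic} once more gives
\[
I(A_{n,k}) \simeq I(A_{n,k} - v) \vee \Sigma I(A_{n-5,k+1}).
\]
Since the hypotheses of part (1) are met by $n-5$ (in place of $n$), we have $I(A_{n-5,k+1}) \simeq \Sigma I(A_{n-10,k+2})$; substituting this into the right-hand side produces $I(A_{n,k}) \simeq I(A_{n,k} - v) \vee \Sigma^2 I(A_{n-10,k+2})$, as desired.

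There is no genuine obstacle here: this lemma is pure bookkeeping on top of Propositions \ref{prop null-homotopic} and \ref{prop A - v}. Its purpose, which will be exploited in the final subsection, is to give a clean two-step recursion: repeated application of (2) reduces $n$ by $10$ while increasing $k$ by $2$, so after finitely many steps one reaches a base case $A_{n,k}$ with $n \in \{1,\dots,5\}$ for which Proposition \ref{prop 1-5} gives the homotopy type explicitly.
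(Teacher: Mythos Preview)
Your proof is correct and matches the paper's argument essentially verbatim: part (2) is identical, and for part (1) the paper appeals directly to the cofiber sequence (Theorem \ref{thm cofiber} plus Proposition \ref{prop A - N[v]}) since contractibility of $I(A_{n,k}-v)$ already forces $I(A_{n,k}) \simeq \Sigma I(A_{n-5,k+1})$, whereas you route through the wedge splitting of Proposition \ref{prop null-homotopic} and then collapse the contractible summand---a cosmetic difference only.
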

\begin{proof}
Suppose $n$ is odd. It follows from Proposition \ref{prop A - v} that $I(A_{n,k} - v)$ is contractible. Hence Theorem \ref{thm cofiber} and Proposition \ref{prop A - N[v]} imply $I(A_{n,k}) \simeq \Sigma I(A_{n,k} - N[v]) \simeq \Sigma I(A_{n-5, k+1})$. This completes the proof of (1).

Suppose that $n$ is even and $n \ge 12$. By Proposition \ref{prop null-homotopic} we have
\begin{eqnarray*}
I(A_{n,k}) & \simeq & I(A_{n,k} - v) \vee \Sigma I(A_{n-5, k+1}) \\
& \simeq & I(A_{n,k} - v) \vee \Sigma^2 I(A_{n-10,k+2})
\end{eqnarray*}
Here the second homotopy equivalence follows from (1). This completes the proof.
\end{proof}

\begin{cor} \label{cor 2}
There is a following homotopy equivalence
\[ I(A_{4l,k}) \simeq \begin{cases}
\displaystyle\Big( \bigvee_3 S^{5k+5l-1}\Big) \vee \Sigma^4 I(A_{4(l-5), k+4}) & (k=0,1)\\
\displaystyle \Big( \bigvee_4 S^{5k+5l-1}\Big) \vee \Sigma^4 I(A_{4(l-5), k+4}) & (k \ge 2).
\end{cases}\]
\end{cor}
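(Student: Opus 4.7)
The plan is to apply Lemma \ref{lem 1}(2) twice in succession and then read off the ``$-v$'' pieces via Proposition \ref{prop A - v}. Since $n = 4l$ is even, a first application of Lemma \ref{lem 1}(2) yields
\[
I(A_{4l,k}) \simeq I(A_{4l,k}-v) \vee \Sigma^2 I(A_{4l-10,k+2}).
\]
Now $4l-10 = 4(l-3)+2$ is even, so a second application (valid once $l$ is large enough for $4l-10 \ge 12$) gives
\[
I(A_{4l-10,k+2}) \simeq I(A_{4l-10,k+2}-v) \vee \Sigma^2 I(A_{4(l-5),k+4}),
\]
and substituting produces the three-term splitting
\[
I(A_{4l,k}) \simeq I(A_{4l,k}-v) \vee \Sigma^2 I(A_{4l-10,k+2}-v) \vee \Sigma^4 I(A_{4(l-5),k+4}).
\]

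Next, evaluate the two ``$-v$'' summands by Proposition \ref{prop A - v}. The first piece $I(A_{4l,k}-v)$ is $S^{5k+5l-1}$ when $k\in\{0,1\}$ and $S^{5k+5l-1}\vee S^{5k+5l-1}$ when $k\ge 2$. The second piece has the form $I(A_{4l'+2,k'}-v)$ with $l' = l-3$ and $k' = k+2$; since $k' \ge 2$ automatically, Proposition \ref{prop A - v}(2) gives
\[
I(A_{4l-10,k+2}-v) \simeq S^{5(k+2)+5(l-3)+2} \vee S^{5(k+2)+5(l-3)+2} = S^{5k+5l-3} \vee S^{5k+5l-3},
\]
so after $\Sigma^2$ it contributes exactly two copies of $S^{5k+5l-1}$. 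Adding the sphere counts in the common dimension $5k+5l-1$ gives $1+2=3$ when $k\in\{0,1\}$ and $2+2=4$ when $k\ge 2$, while the term $\Sigma^4 I(A_{4(l-5),k+4})$ is carried through unchanged. This is precisely the claimed formula.

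The main subtlety is really just bookkeeping: the crucial identity is the dimension shift $5(k+2)+5(l-3)+2 = 5k+5l-3$, which is exactly what the suspension $\Sigma^2$ needs so that the ``middle'' spheres land in the same dimension $5k+5l-1$ as those coming from the first $-v$ piece. The only genuine hypothesis to keep in mind is that both applications of Lemma \ref{lem 1}(2) require the subscript to be even and at least $12$, so the identity is used only in the range $l \ge 6$ (and the remaining small values of $l$ are absorbed into the base cases of the main induction in Subsection~4.6).
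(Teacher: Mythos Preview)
Your proof is correct and follows essentially the same route as the paper: apply Lemma~\ref{lem 1}(2) twice to obtain the three-term splitting $I(A_{4l,k}-v)\vee \Sigma^2 I(A_{4l-10,k+2}-v)\vee \Sigma^4 I(A_{4(l-5),k+4})$, then evaluate the two $-v$ summands via Proposition~\ref{prop A - v}. The paper's proof is simply a terser version of yours; your explicit remark that both applications of Lemma~\ref{lem 1}(2) need the subscript to be at least $12$ (hence $l\ge 6$) is a useful clarification that the paper leaves implicit.
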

\begin{proof}
Lemma \ref{lem 1} implies
\begin{eqnarray*}
I(A_{4l, k}) & \simeq & I(A_{4l,k} - v) \vee \Sigma^2 I(A_{4l - 10, k+2}) \\
&\simeq & I(A_{4l,k} - v) \vee \Sigma^2 I(A_{4l - 10, k+2} - v) \vee \Sigma^4 I(A_{4l -20, k+4}).
\end{eqnarray*}
Proposition \ref{prop A - v} completes the proof.
\end{proof}

\begin{lem} \label{lem 3}
The following hold:
\begin{enumerate}[$(1)$]
\item
\[ I(A_{4,k}) \simeq \begin{cases}
S^{5k+4} & (k= 0,1) \\
\bigvee_2 S^{5k+4} & (k \ge 2).
\end{cases}\]
\item For $l = 2,3,4$, there is a following homotopy equivalence:
\[ I(A_{4l,k}) \simeq \begin{cases}
\bigvee_3 S^{5k+5l - 1} & (k = 0,1) \\
\bigvee_4 S^{5k+5l - 1} & (k \ge 2).
\end{cases}\]

\item
\[ I(A_{20,k}) \simeq \begin{cases}
\bigvee_3 S^{5k+24} \vee \bigvee_2 S^{5k+23} & (k = 0,1) \\
\bigvee_4 S^{5k+24} \vee \bigvee_2 S^{5k+23} & (k \ge 2).
\end{cases}\]
\end{enumerate}
\end{lem}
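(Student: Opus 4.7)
The statement (1) is literally Proposition \ref{prop 1-5}(4), so there is nothing new to prove. For (2) and (3), my plan is to decompose $I(A_{4l,k})$ by stripping off the leftmost columns using the splittings already established in Propositions \ref{prop A - N[v]}, \ref{prop null-homotopic}, and the iterated version Lemma \ref{lem 1}, and then substitute the known homotopy types from Propositions \ref{prop A - v} and \ref{prop 1-5}. The only real work is bookkeeping — matching the subcases ($k=0,1$ vs.\ $k\ge 2$) correctly and keeping track of sphere dimensions.

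For part (2), the case $l=2$, i.e.\ $n=8$, falls outside the scope of Lemma \ref{lem 1}(2) (which needs $n\ge 12$), so I would apply Proposition \ref{prop null-homotopic} once to obtain $I(A_{8,k})\simeq I(A_{8,k}-v)\vee\Sigma I(A_{3,k+1})$ and then read off both summands: Proposition \ref{prop A - v} gives $I(A_{8,k}-v)$ as $S^{5k+9}$ (resp.\ $\bigvee_2 S^{5k+9}$) when $k\le 1$ (resp.\ $k\ge 2$), while Proposition \ref{prop 1-5}(3) gives $\Sigma I(A_{3,k+1})\simeq \bigvee_2 S^{5k+9}$ for every $k\ge 0$. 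Combining these yields $\bigvee_3 S^{5k+9}$ or $\bigvee_4 S^{5k+9}$ as required. For $l=3$ and $l=4$ I would invoke Lemma \ref{lem 1}(2): this gives $I(A_{12,k})\simeq I(A_{12,k}-v)\vee\Sigma^2 I(A_{2,k+2})$ and $I(A_{16,k})\simeq I(A_{16,k}-v)\vee\Sigma^2 I(A_{6,k+2})$. Proposition \ref{prop 1-5}(2) handles $I(A_{2,k+2})$ directly, and for $I(A_{6,k+2})$ I would apply Proposition \ref{prop null-homotopic} once more: since $I(A_{1,k+3})$ is contractible by Proposition \ref{prop 1-5}(1), one is left with $I(A_{6,k+2})\simeq I(A_{6,k+2}-v)$, which Proposition \ref{prop A - v} identifies as $\bigvee_2 S^{5k+17}$ (note $k+2\ge 2$). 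Verifying dimensions $5(k+2)+2+2=5k+14$ and $5(k+2)+7+2=5k+19$ then gives the desired wedges.

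For part (3), I would again start with Lemma \ref{lem 1}(2) to get $I(A_{20,k})\simeq I(A_{20,k}-v)\vee\Sigma^2 I(A_{10,k+2})$. The new feature, compared to $l=2,3,4$, is that the innermost factor $\Sigma I(A_{5,k+3})$ arising from another application of Proposition \ref{prop null-homotopic} to $I(A_{10,k+2})$ is nontrivial, which is what produces spheres in two distinct dimensions. Explicitly, $I(A_{10,k+2})\simeq I(A_{10,k+2}-v)\vee \Sigma I(A_{5,k+3})$; Proposition \ref{prop A - v} gives the first summand as $\bigvee_2 S^{5k+22}$ and Proposition \ref{prop 1-5}(5) gives $\Sigma I(A_{5,k+3})\simeq \bigvee_2 S^{5k+21}$. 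Suspending twice and wedging with $I(A_{20,k}-v)$ from Proposition \ref{prop A - v} yields the two formulas in (3), where the factor of $\bigvee_3$ vs.\ $\bigvee_4$ in top degree comes entirely from the subcase distinction in Proposition \ref{prop A - v}, while the $\bigvee_2 S^{5k+23}$ part is independent of $k$.

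There is no serious obstacle here: Proposition \ref{prop null-homotopic} has already killed all attaching maps, so everything splits as a wedge. The entire argument is mechanical substitution, and the only thing to watch is that for every term $I(A_{n',k'}-v)$ or $I(A_{n',k'})$ one encounters, the second index $k'$ lands in the correct subcase of Proposition \ref{prop A - v} or Proposition \ref{prop 1-5} — in particular, $k+2\ge 2$ and $k+3\ge 1$ for all $k\ge 0$, which is precisely what makes the numbers $3-2\nu$ (with $\nu=1$ in top degree when $k\le 1$) come out as written.
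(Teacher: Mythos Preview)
Your proposal is correct and follows essentially the same approach as the paper: both reduce (1) to Proposition \ref{prop 1-5}(4), and for (2)--(3) both use the splitting from Proposition \ref{prop null-homotopic} (combined with Lemma \ref{lem 1}(2) when $n\ge 12$) to express each $I(A_{4l,k})$ as a wedge involving $I(A_{4l,k}-v)$ and a suspended smaller $I(A_{n',k'})$, then read off the pieces via Propositions \ref{prop A - v} and \ref{prop 1-5}. The only cosmetic difference is that the paper records $\Sigma^3 I(A_{1,k+3})$ explicitly in the $l=4$ case rather than immediately observing it is contractible, but the substance is identical.
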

\begin{proof}
We have already proved (1) in Proposition \ref{prop 1-5}. To prove (2) and (3), we use the following homotopy equivalences
\[ I(A_{8,k}) \simeq I(A_{8,k} - v) \vee \Sigma I(A_{3,k+1}),\]
\[ I(A_{12,k}) \simeq I(A_{12,k} - v) \vee \Sigma^2 I(A_{2,k+2}),\]
\[ I(A_{16,k}) \simeq I(A_{16,k} - v) \vee \Sigma^2 I(A_{6,k+2}) \simeq I(A_{16,k} - v) \vee \Sigma^2 I(A_{6,k+2} - v) \vee \Sigma^3 I(A_{1,k+3}),\]
\[ I(A_{20,k}) \simeq I(A_{20,k} - v) \vee \Sigma^2 I(A_{10,k+2}) \simeq I(A_{20,k} - v) \vee \Sigma^2 I(A_{10,k+2} - v) \vee \Sigma^3 I(A_{5,k+3}).\]
Here we use Proposition \ref{prop null-homotopic} and Lemma \ref{lem 1} to derive these homotopy equivalences. Then Propositions \ref{prop A - v} and \ref{prop 1-5} complete the proof.
\end{proof}

When $n$ is even,
the homotopy type of $I(A_{n, k})$ is determined by the following two propositions.
Here we write $n = 20t + 4l + \varepsilon$ with $l\in\{0,1,2,3,4\}$ and $\varepsilon\in\{0,1,2,3\}$.
Define $\kappa\in\{0,1\}$ by $0$ when $k = 0, 1$ and by $1$ when $k \ge 2$.


\begin{prop} \label{prop 4}
  Consider the case $\varepsilon = 0$ and assume $n \ge 8$. Then the following hold:
  \begin{enumerate}[$(1)$]
    \item For $l = 0, 1$ (and hence $t \ge 1$), we have
      \[
      I(A_{20t+4l, k}) \simeq \bigvee_{3+\kappa}S^{25t + 5l + 5k - 1}
      \vee \Big( \bigvee_{i = 24t + 5l + 5k}^{25 t + 5l + 5k - 2} \Big( \bigvee_4 S^i \Big) \Big)
      \vee \bigvee_2 S^{24t + 5l + 5k - 1}.
      \]
    \item For $l = 2, 3, 4$, we have
      \[
      I(A_{20t+4l, k}) \simeq \bigvee_{3+\kappa}S^{25t + 5l + 5k - 1}
      \vee \Big( \bigvee_{i = 24t + 5l + 5k - 1}^{25t + 5l + 5k - 2} \Big( \bigvee_4S^i \Big) \Big).
      \]
  \end{enumerate}
\end{prop}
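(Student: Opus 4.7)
The plan is to induct on $t$, using Corollary \ref{cor 2} as a recursive identity that reduces $(t,l,k)$ to $(t-1,l,k+4)$. Applying Corollary \ref{cor 2} with $l' = 5t+l$ (so that $4l' = 20t+4l$) yields the master recursion
\[
I(A_{20t+4l,k}) \simeq \bigvee_{3+\kappa} S^{25t+5l+5k-1} \vee \Sigma^4 I(A_{20(t-1)+4l,k+4}).
\]
Note that $k+4 \ge 4 \ge 2$, so the corresponding $\kappa$ for the inductive term is always $1$; this is what will eventually produce the $\bigvee_4$ in the interior wedge range. Since $l$ is preserved by the recursion, cases (1) and (2) never mix.

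First I would dispatch the base cases. For case (2) with $l \in \{2,3,4\}$, the base $t = 0$ is given directly by Lemma \ref{lem 3}(2) as $I(A_{4l,k}) \simeq \bigvee_{3+\kappa} S^{5k+5l-1}$, which matches the proposed formula since the index set $24t+5l+5k-1 \le i \le 25t+5l+5k-2$ is empty when $t = 0$. For case (1) with $l \in \{0,1\}$ the base is $t = 1$. When $l = 0$, Lemma \ref{lem 3}(3) gives $I(A_{20,k}) \simeq \bigvee_{3+\kappa} S^{5k+24} \vee \bigvee_2 S^{5k+23}$ (again with empty middle range), as required. When $l = 1$, I would apply the master recursion to reduce to $\Sigma^4 I(A_{4,k+4})$ and invoke Lemma \ref{lem 3}(1) (valid since $k+4 \ge 2$) to obtain $\bigvee_{3+\kappa} S^{5k+29} \vee \bigvee_2 S^{5k+28}$, matching the formula (middle range again empty).

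For the inductive step ($t \ge 1$ in case (2), $t \ge 2$ in case (1)), I would apply the inductive hypothesis to $I(A_{20(t-1)+4l,k+4})$ and suspend by $\Sigma^4$. The IH produces, at the top, $\bigvee_4 S^{25(t-1)+5l+5(k+4)-1} = \bigvee_4 S^{25t+5l+5k-6}$ and, in the middle, $\bigvee_4 S^i$ for $i$ in the IH range; after $\Sigma^4$ these become $\bigvee_4 S^j$ for $j$ ranging from the lower end of the shifted IH range up through $25t+5l+5k-2$, which fills exactly one dimension below the new top sphere. Wedging with $\bigvee_{3+\kappa} S^{25t+5l+5k-1}$ produces the stated formula for case (2). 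In case (1), the IH has an extra $\bigvee_2 S^{24(t-1)+5l+5(k+4)-1} = \bigvee_2 S^{24t+5l+5k-5}$, which under $\Sigma^4$ lands on $\bigvee_2 S^{24t+5l+5k-1}$, precisely the bottom pair in the claimed formula; the interior $\bigvee_4$ range lifts to $24t+5l+5k \le j \le 25t+5l+5k-2$ as asserted.

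The main obstacle is arithmetic bookkeeping: one must check that the shift by $\Sigma^4$ combined with the increment of $k$ by $4$ glues the suspended IH range onto the new top spheres without a gap or overlap, and that the distinction between the two cases is genuinely preserved (so that the $\bigvee_2$-tail in case (1) never leaks into case (2)). Once the indexing is verified, the argument is a direct induction.
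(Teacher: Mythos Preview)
Your proposal is correct and follows essentially the same approach as the paper. The paper's proof is a terse two-line version of exactly your induction on $t$: it records the recursion coming from Corollary~\ref{cor 2} and then invokes Lemma~\ref{lem 3} as the base case, leaving the arithmetic bookkeeping you carefully spelled out to the reader.
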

\begin{proof}
Corollary \ref{cor 2} implies
\[ I(A_{4l + 20t,k}) \simeq \begin{cases}
\Big( \bigvee_3 S^{25t + 5l + 5k - 1} \Big) \vee \Sigma^4 I(A_{20(t-1)+4l, k+4}) & (k=0,1)\\
\Big( \bigvee_4 S^{25t + 5l + 5k - 1} \Big) \vee \Sigma^4 I(A_{20(t-1)+4l, k+4}) & (k \ge 2).
\end{cases}\]
Lemma \ref{lem 3} completes the proof.
\end{proof}


\begin{prop} \label{prop 5}
  Consider the case $\varepsilon = 2$. For every $n\ge 2$ (i.e.\ $t,l\ge 0$), the following hold:
  \begin{enumerate}[$(1)$]
    \item For $l = 0, 1$, we have
      \[
      I(A_{20t+4l+2, k}) \simeq \bigvee_{1+\kappa}S^{25t + 5l + 5k + 2}
      \vee \Big( \bigvee_{i = 24t + 5l + 5k + 2}^{25t + 5l + 5k + 1} \Big( \bigvee_4S^i \Big) \Big).
      \]
    \item For $l = 2, 3$, we have
      \[
      I(A_{20t+4l+2, k}) \simeq \bigvee_{1+\kappa}S^{25t + 5l + 5k + 2}
      \vee \Big( \bigvee_{i = 24t + 5l + 5k + 2}^{25t + 5l + 5k + 1} \Big( \bigvee_4 S^i \Big) \Big)
      \vee \bigvee_2 S^{24t + 5l + 5k + 1}.
      \]
    \item For $l = 4$, we have
      \[
      I(A_{20t+4l+2, k}) \simeq \bigvee_{1+\kappa}S^{25t + 5l + 5k + 2}
      \vee \Big( \bigvee_{i = 24 t + 5l + 5k + 1}^{25t + 5l + 5k + 1} \Big( \bigvee_4S^i \Big) \Big).
      \]
  \end{enumerate}
\end{prop}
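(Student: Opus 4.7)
The plan is to mirror the proof of Proposition \ref{prop 4}: apply Lemma \ref{lem 1}(2) to descend by $10$ in $n$ and $2$ in $k$, evaluate the two resulting summands via Propositions \ref{prop A - v} and \ref{prop 4}, and handle the small cases $n \in \{2, 6, 10\}$ (where Lemma \ref{lem 1}(2) is inapplicable) by hand.

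For $n \ge 12$, Lemma \ref{lem 1}(2) yields
\[
I(A_{20t + 4l + 2, k}) \simeq I(A_{20t + 4l + 2, k} - v) \vee \Sigma^2 I(A_{20t + 4l - 8, k + 2}).
\]
Proposition \ref{prop A - v} identifies the first summand with $\bigvee_{1+\kappa} S^{25t + 5l + 5k + 2}$. For the second, I rewrite $20t + 4l - 8 = 20t'' + 4l''$ with $l'' \in \{0, 1, 2, 3, 4\}$: for $l \in \{0, 1\}$, take $t'' = t - 1$ (which forces $t \ge 1$) and $l'' = l + 3$; for $l \in \{2, 3\}$, take $t'' = t$ and $l'' = l - 2$; for $l = 4$, take $t'' = t$ and $l'' = 2$. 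Proposition \ref{prop 4} then handles all sub-cases except $(t, l) = (0, 3)$, where the subproblem becomes $I(A_{4, k+2})$, which lies outside the range of Proposition \ref{prop 4}(1) and is supplied instead by Lemma \ref{lem 3}(1). Since $k'' = k + 2 \ge 2$ always, the parameter $\kappa''$ appearing in Proposition \ref{prop 4} equals $1$ throughout.

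The key arithmetic step is that, after suspending twice, the top-dimensional summand $\bigvee_4 S^{25t + 5l + 5k + 1}$ inside $\Sigma^2 I(A_{n - 10, k + 2})$ merges with the adjacent range $\bigvee_i \bigvee_4 S^i$ contributed by Proposition \ref{prop 4}, pushing its upper endpoint up to $25t + 5l + 5k + 1$ and yielding the ranges stated in (1)--(3). The extra $\bigvee_2 S^{24t + 5l + 5k + 1}$ factor in Case (2) descends from the analogous $\bigvee_2$ summand in Proposition \ref{prop 4}(1), which is present precisely when $l'' \in \{0, 1\}$, i.e.\ when $l \in \{2, 3\}$.

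Finally, I handle the base cases. For $n = 2$ the statement is immediate from Proposition \ref{prop 1-5}(2), as the wedge range $\bigvee_{i = 5k + 2}^{5k + 1}$ is empty. For $n = 6$ and $n = 10$ (Cases (1) and (2) with $t = 0$), I substitute Proposition \ref{prop null-homotopic} for Lemma \ref{lem 1}(2), giving
\[
I(A_{n, k}) \simeq I(A_{n, k} - v) \vee \Sigma I(A_{n - 5, k + 1}),
\]
and evaluate $I(A_{n - 5, k + 1})$ by Proposition \ref{prop 1-5} since $n - 5 \in \{1, 5\}$. No new homotopy-theoretic input is required beyond what has already been established; the sole obstacle is the bookkeeping, namely verifying that in each of the three sub-cases the shifted indices produced by Proposition \ref{prop 4} align cleanly with the Proposition \ref{prop A - v} summand to reproduce the exact intervals in the statement.
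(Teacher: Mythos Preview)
Your proposal is correct and follows essentially the same route as the paper: base cases $n=2,6,10$ via Proposition~\ref{prop 1-5} and Proposition~\ref{prop null-homotopic}, and the general splitting $I(A_{n,k})\simeq I(A_{n,k}-v)\vee\Sigma^2 I(A_{n-10,k+2})$ from Lemma~\ref{lem 1}(2) feeding into Proposition~\ref{prop 4}. The only cosmetic difference is that the paper lists $n=14$ among the base cases (reducing to $I(A_{4,k+2})$ via Proposition~\ref{prop 1-5}(4)), whereas you fold it into the $n\ge 12$ argument and flag $(t,l)=(0,3)$ as the one instance needing Lemma~\ref{lem 3}(1) instead of Proposition~\ref{prop 4}; the content is identical.
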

\begin{proof}
  We have already determined the homotopy type of $I(A_{2,k})$ in Proposition \ref{prop 1-5}.
  Proposition \ref{prop null-homotopic} and Lemma \ref{lem 1} imply
  \[ I(A_{6,k}) \simeq I(A_{6,k} - v) \vee \Sigma I(A_{1,k+1}),\]
  \[ I(A_{10,k}) \simeq I(A_{10,k} - v) \vee \Sigma I(A_{5,k+1}),\]
  \[ I(A_{14,k}) \simeq I(A_{14,k} - v) \vee \Sigma^2 I(A_{4,k+2}). \]
  Thus the homotopy types of $I(A_{6,k})$, $I(A_{10,k})$ and $I(A_{14, k})$ are determined by Propositions \ref{prop A - v} and \ref{prop 1-5}.
  For $n \ge 18$, Lemma \ref{lem 1} implies
  $I(A_{n, k})\simeq I(A_{n,k}-v)\vee\Sigma^2I(A_{n-10,k+2})$.
  Its homotopy type is determined in Proposition \ref{prop 4} since $n-10 \ge 8$.
\end{proof}

\begin{proof}[Proof of Theorem \ref{main thm}]
Since $\Gamma_n = A_{n,0}$, the homotopy type of $I(\Gamma_n)$ for even $n$ is determined by Propositions \ref{prop 4} and \ref{prop 5}. The homotopy types of $I(\Gamma_1)$, $I(\Gamma_3)$, and $I(\Gamma_5)$ are determined in Proposition \ref{prop 1-5}. For an odd number $n$ with $n > 5$, (1) of Lemma \ref{lem 1} implies
\[ I(\Gamma_n) \simeq \Sigma I(A_{n-5,1}),\]
and the homotopy type of $I(A_{n-5,1})$ is determined by Propositions \ref{prop 4} and \ref{prop 5}. This completes the proof.
\end{proof}


\section*{Acknowledgment}
The authors thank the referee for useful comments. The first author was partially supported by JSPS KAKENHI Grant Number JP19K14536 and 23K12975. The second author was supported by JSPS KAKENHI Grant Number JP20J00404.

\end{document}